\theoremstyle{definition}
\newtheorem{remark}{Remark}
\newtheorem{example}{Example}
\newtheoremstyle{mytheorem}{0.5cm}{0.2cm}{\slshape}{ }{\bfseries}{.}{ }{}
\theoremstyle{mytheorem}
\newtheorem{theorem}{Theorem}
\newtheorem{lemma}{Lemma}
\newtheorem{proposition}{Proposition}
\newtheorem{corollary}{Corollary}
\newtheorem{assumption}{Assumption}
\newcommand{\E}{\mathbf{E}}
\newcommand{\Vol}{\textnormal{Vol}}
\newcommand{\Var}{\textnormal{\textbf{Var}}}
 \renewcommand{\P}{\mathbf{P}}
 \newcommand{\cl}[1]{\textnormal{cl}(#1)}
 \newcommand{\ins}[1]{\textnormal{int}(#1)}
\newcommand{\X}{\chi}
\newcommand{\dK}{d_{\mathcal{K}}}
\newlength{\querylen}
\title{Boundary density and Voronoi set estimation for irregular sets}
\author{Rapha\"el Lachi\`eze-Rey\thanks{raphael.lachieze-rey@parisdescartes.fr, Laboratoire MAP5 (UMR CNRS 8145), Universit\'e Paris Descartes, Sorbonne Paris Cit\'e}~ and Sergio Vega \thanks{Laboratoire MAP5 (UMR CNRS 8145), Universit\'e Paris Descartes, Sorbonne Paris Cit\'e}}
\begin{document} \maketitle


\begin{center}\textbf{Abstract}
\end{center}
 
In this paper, we study the inner and outer boundary densities of some sets with self-similar boundary having Minkowski dimension $s>d-1$ in $\mathbb{R}^{d}$. These quantities turn out to be crucial in some problems of set estimation, as we show here for the Voronoi approximation of the set with a random input constituted by $n$ iid points in some larger bounded domain. We prove that some classes of such sets have positive inner and outer boundary density, and therefore satisfy Berry-Esseen bounds in $n^{-s/2d}$ for Kolmogorov distance. The Von Koch flake serves as an example, and a set with Cantor boundary as a counter-example. We also give the almost sure rate of convergence of Hausdorff distance between the set and its approximation.

\paragraph{Keywords} Voronoi approximation; Set estimation; Minkowski dimension; Berry-Esseen bounds; self-similar sets

\paragraph{MSC 2010 Classification} Primary 60D05, 60F05, 28A80, Secondary 28A78, 49Q15

\paragraph{Notations} We designate by $d(.,.)$ the Euclidean distance between points or subsets of $\mathbb{R}^{d}$. The closure, the interior, the topological boundary and the diameter of a set $A\subset\mathbb{R}^{d}$ are designated by $\cl{A}$, $\ins{A}$, $\partial A$, $\text{diam}(A)$ respectively. The open Euclidean ball with center $x$ and radius $r$ in $\mathbb{R}^{d}$ is noted $B(x,r)$.

Given two sets $A,B$, we write $A+B$ for $\{c\in \mathbb{R}^{d}\mid c=a+b, a\in A, b\in B \}$. The Hausdorff distance between $A$ and $B$ is designated by $d_{H}(A,B)$, that is
\begin{equation*}
d_{H}(A,B)=\inf\{r>0:\,A\subset B+B(0,r),\,B\subset A+B(0,r)\}.
\end{equation*}
$\Vol$ is the $d$-dimensional Lebesgue measure and $\kappa_{d}$ is the volume of the Euclidean unit ball. For $s>0,$ $\mathcal{H}^{s}$ is the $s$-dimensional Hausdorff measure on $\mathbb{R}^{d}$.

Throughout the paper, $K\subset \mathbb{R}^{d}$ is a non-empty compact set with positive volume. The letters  $c,C$ are reserved to indicate positive constants that depend only on fixed parameters like $K$ or $d$, and which value may change from line to line.

\section*{Background}

Set estimation theory is a topic of nonparametric statistics where an unknown set $K$ is estimated, based on partial random information.  The random input generally consists in a finite sample $\chi $ of points, either IID variables \cite{CFR,Pen07} or a Poisson point process \cite{HevRei,JimYuk,ReiSpoZap}. Based on the information of which of those points belong or not to $K$, one can reconstruct a random approximation $K_{\chi }$ of $K$ and study the asymptotic quality of the approximation. See the recent survey \cite[Chap. 11]{KenMol} about related works in nonparametric statistics.

The results generally require the set to be smooth in some sense.  In the literature, the set under study is assumed to be convex \cite{ReiSpoZap,Sch12},  $r$-convex \cite{CueRod04,Rod07}, to have volume polynomial expansion \cite{BCCF}, positive reach, or a $(d-1)$-rectifiable boundary \cite{JimYuk}. Another class of regularity assumptions usually needed is that of \emph{sliding ball} or \emph{rolling ball} conditions (\cite{CFP,Walth97,Walth99}). The most common form of this condition is that in every point $x$ of the boundary, there must be a ball touching $x$ and contained either in $K$, in $K^{c}$, or  both. 

In those works, the random approximation model $K_{\chi }$ can be the union of balls centred in the points of $\chi $ with well tuned radius going to $0$,  a level set of the sum of appropriately scaled kernels centred on the random points, or else. Recently, a different model has been used in stochastic geometry, based on the Voronoi tessellation associated with $\chi $. One defines $K_{\chi }$ as the union of all Voronoi cells which centers lie in $K$, assuming  that points of $\chi $ fall indifferently inside and outside $K $, as $K$ is unknown. This is equivalent to defining $K_{\chi }$ as the set of points that are closer to $\chi \cap K$ than to $\chi \cap K^{c}$.

This elegant model presents practical advantages in set estimation. For volume estimation the bias and standard deviation rates of the Voronoi approximation seem to be best among all estimators of which the authors are aware of, and hold under almost no assumption on $K$. Regarding shape estimation, Voronoi approximation also consistently estimates $K$ and $\partial K$ in the sense of the Hausdorff distance (Proposition~\ref{consistency-hausdorff}), and here again convergence rates and necessary assumptions compare favourably to those of other estimators (see Theorem~\ref{theorem-haus} and the following Remarks).

An heuristic explanation of these features is that the estimator naturally fills in regions inside $K$ where the sample $\chi$ is sparse, without need for convexity-like assumptions on $K$ \cite{Rod07} or parameter tuning \cite{BiauCadre09,CFR,DevWise}.

The reader will find a more formal presentation of Voronoi approximation along with a summary of existing results \cite{CalChe13,HevRei, JimYuk, ReiSpoZap, Sch12} in Section~\ref{sec:voronoi}.

\section*{Approach and main results}

This work was inspired and is closely related to \cite{LacPec15}, in which a central limit theorem and variance asymptotics for $\Vol(K_{\chi})$ were obtained for binomial input under very weak assumptions on $K$. Here we slighlty enhance their central limit theorem by showing that $\Vol(K_{\chi})$ can be recentered by $\Vol(K)$ instead of $\E(\Vol(K_{\chi}))$. Explicitly, for suitable $K$, we have for each $\varepsilon >0$ a constant $C_{\varepsilon >0}$ such that
\begin{equation}
\label{eq:berry-esseen-2-intro}
\sup_{t\in \mathbb{R}}\left|  \P \left( \frac{\Vol (K_{\chi _{n}})-\Vol(K)}{\sqrt{ \Var(\Vol(K_{\chi _{n}}))}}\geqslant t  \right)-\P(N\geqslant t) \right| \leqslant C_{\varepsilon }n^{-s/2d}\log(n)^{4-s/d+\varepsilon }
\end{equation}
where $s$ is the Minkowski dimension of $\partial K$ (see Section~\ref{sec:MinkoContents}).

We also show that with Poisson input we have the almost sure convergence rates for the Hausdorff distance
\begin{align}
& c \leqslant \liminf\limits_{n\rightarrow +\infty} \frac{d_{H}(K,K_{\chi_{n}})}{(n^{-1} \ln(n))^{1/d}} \leqslant \limsup\limits_{n\rightarrow +\infty} \frac{d_{H}(K,K_{\chi_{n}})}{(n^{-1} \ln(n))^{1/d}} \leqslant C , \label{eq:haus-intro} \\
& c \leqslant \liminf\limits_{n\rightarrow +\infty} \frac{d_{H}(\partial K, \partial K_{\chi_{n}})}{(n^{-1} \ln(n))^{1/d}} \leqslant \limsup\limits_{n\rightarrow +\infty} \frac{d_{H}(\partial K,\partial K_{\chi_{n}})}{(n^{-1} \ln(n))^{1/d}} \leqslant C, \label{eq:haus-intro-2}
\end{align}
thus answering a query raised in \cite{HevRei} and extending the results obtained in \cite{CalChe13}.


\paragraph{}

The assumptions on $K$ necessary for \eqref{eq:berry-esseen-2-intro},\eqref{eq:haus-intro} and \eqref{eq:haus-intro-2} to hold are worth of interest on their own. They are broad enough to allow for irregular $K$, a feature which few estimators possess and is useful in some applications (see \cite{CFR,JimYuk}, and references therein). Also, they are not specific to Voronoi approximation, and might be crucial for other estimators. They are mainly concerned with the densities of $K$ at radius $r$ in $x$, defined by
\begin{align*}
f^{K}_{r}(x)=& \enspace \frac{\Vol(K\cap B(x,r))}{\Vol(B(x,r))}, \\
f^{K^{c}}_{r}(x)=& \enspace \frac{\Vol(K^{c}\cap B(x,r))}{\Vol(B(x,r))}.
\end{align*}
For ease of notation, we shall simply write $f_{r}$ for $f^{K}_{r}$ and $g_{r}$ for $f_{r}^{K^{c}}$, $K$ being implicit in all of the paper. Boundary densities have already appeared in set estimation theory \cite{Cue90, CalChe13, CFR}, where a set $K$ is said to be \emph{standard} whenever $f_{r}\geqslant \varepsilon$ on $K$ for some fixed $\varepsilon > 0$ and all small enough $r$. Here we shall prefer to specify \emph{inner standard} since we are also interested in cases where the inequality $g_{r}>\varepsilon$ holds. In the latter case, $K$ is said to be \emph{outer standard}, and if $K$ is both inner and outer standard $K$ will be said to be \emph{bi-standard}. The condition on $K$ for \eqref{eq:haus-intro-2} to hold is essentially bi-standardness, which is a usual assumption in set estimation \cite[Theorem 1]{CFR}.

The requirement for \eqref{eq:berry-esseen-2-intro} to hold seems to be new in set estimation theory. {It consists in a positive $\liminf$ {as $r\rightarrow 0$} of the quantity
\begin{equation*}
\frac{1}{\Vol(\partial K_{r})}\int_{K^{c}} f_{r} =\frac{1}{\Vol(\partial K_{r})} \int_{K}g_{r},
\end{equation*} 
where $\partial K_{r}$ is the set of points within distance $r$ from $\partial K$}. See Assumption~\ref{roll} and Proposition~\ref{roll-bis} for precise statements {and equivalent assertions}. The above quantity measures the interpenetration of $K$ and $K^{c}$ along their common boundary, since the greater it is, the more homogenously $K$ and $K^{c}$ are distributed along $\partial K$. This lead us to name the condition of Assumption~\ref{roll} the \emph{boundary permeability condition}.

\paragraph{}
Study of densities on the boundary is also related with works in geometric measure theory. Points for which $\lim f_{r}(x)$ is $0$ or $1$ are considered resp. as the measure-theoretic exterior and interior of $K$, while other points constitute $\partial ^{*}K$ the \emph{essential boundary} of $K$. Federer \cite[Th.3.61]{AFP} proved that if $K$ is a measurable set with finite measure-theoretic perimeter then $f_{r}\rightarrow 1/2$ on most of the essential boundary.

We address here the question of whether comparable results hold if $\partial K$ is an irregular set, with self-similar features. In general, such boundaries have a Hausdorff dimension $s>d-1$  and don't have finite perimeter. But, because of self-similarity, the densities $f_{r},g_{r}$ should nevertheless have continuous and somehow periodical fluctuations in $r$, and therefore a positive infimum. This is confirmed by Theorem~\ref{theorem1}, which gives, for $K$ with self-similar boundary, a set of conditions under which  $f_{r}>\varepsilon $ on the boundary uniformly in $r>0$. It is even proved that a ball with radius $cr$ for some $c>0$ can be rolled inside or outside the boundary, staying within a distance $r$ from the boundary, but not touching it (otherwise self-similar boundaries would be excluded). Theorem \ref{theorem1} applies for instance to the Von Koch flake in dimension $2$, which is therefore well-behaved under Voronoi approximation and satisfies \eqref{eq:berry-esseen-2-intro}, \eqref{eq:haus-intro} and \eqref{eq:haus-intro-2}.

Some sets with self-similar boundary do not fall under the scope of this result, and we also give example of a self-similar set $K_{\text{cantor}}$ with Cantor-like self-similar boundary not satisfying the boundary permeability condition. Simulations we ran suggest that this irregularity of $K_{\text{cantor}}$'s boundary indeed  reflects on the behaviour of its Voronoi approximation and prevents the variance of the estimator from satisfying an asymptotic power law like in \eqref{eq:var}. This suggests that the boundary permeability condition is indeed significant in set estimation and not merely a contingent constraint due to the methods used to obtain \eqref{eq:berry-esseen-2-intro}.

\section*{Plan}

The plan of the paper is as follows. In Section \ref{sec:self-similar}, we recall basic facts and definitions about self-similar sets, especially regarding upper and lower Minkowski contents. We then give conditions under which sets with self-similar boundaries are standard. Voronoi approximation is formally introduced in Section \ref{sec:voronoi}. We then derive the volume normal approximation for sets with well-behaved boundaries, {as well as} Hausdorff distance results. We also develop the counter example $K_{\text{cantor}}$ that satisfies neither the hypotheses of Theorem \ref{theorem1} nor the volume approximation variance asymptotics \eqref{eq:var}.

\section{Self-similar sets}
\label{sec:self-similar}

\subsection{Self-similar set theory}
This subsection contains a review of some classic results of self-similar set theory. A more precise treatment of the subject and most of the results stated here can be found in \cite{Fal}. Broadly speaking, a set is self-similar when arbitrarily small copies of the set can be found in the neighbourhood of any of its points. This suggests that a self-similar set should be associated with a family of similitudes.

\paragraph{}
Let $\{\phi_{i}, i \in I\}$ be a finite set of contracting similitudes. Such a set is called an \emph{iterated function system}. Define the following set transformation
\begin{align*}
\psi: \mathcal{P}(\mathbb{R}^{d})&\longrightarrow \enspace \mathcal{P}(\mathbb{R}^{d}) \\
E \quad &\longmapsto \enspace \bigcup_{i} \phi_{i}(E).
\end{align*}
It is easily seen that $\psi$ is contracting for the Hausdorff metric, which happens to be complete on $\mathcal{K}^{d}$, the class of non-empty compact sets of $\mathbb{R}^{d}$. By a fixed point theorem, there is an unique set $E \in \mathcal{K}^{d}$ satisfying $\psi(E)=E$, which is by definition the self-similar set associated with the $\phi_{i}$.

\paragraph{} If there is a bounded open set $U$ such as $\psi(U)=\bigcup\phi_{i}(U)\subset U$ with the union disjoint, then necessarily $E \subset \cl{U}$ and the $\phi_{i}$ are said to satisfy the \emph{open set condition}. Schief proved in \cite{Sch94} that we can pick $U$ so that $U \cap E$ is not empty. {This stronger assumption is referred to as the \emph{strong open set condition} in the literature}.

\paragraph{}
The similarity dimension of $E$ is the unique $s$ satisfying 
\begin{equation*}
\sum \lambda_{i}^{s}=1
\end{equation*}
where $\lambda_{i}$ is the stretching factor of $\phi_{i}$. When the open set condition holds, this similarity dimension is also the Hausdorff dimension and the Minkowski dimension of $E$. Furthermore, $E$'s upper and lower $s$-dimensional Minkowski contents ({see Subsection~\ref{sec:MinkoContents}}) are finite and positive. This is an easy and probably known result, but since we have not found it explicitly stated and separately proven in the literature, we will do so here in Proposition~\ref{lemma1}. We will need the following classical lemmae, that we prove for completeness.

\begin{lemma}
\label{lemma1}
Let $(U_{i})$ be a collection of disjoint open sets in $\mathbb{R}^{d}$ such that each $U_{i}$ contains a ball of radius $c_{1}r$ and is contained in a ball of radius $c_{2}r$. Then any ball of radius $r$ intersects at most $(1+2c_{2})^{d}c_{1}^{-d}$ of the sets $\cl{U_{i}}$.

\begin{proof}
Let $B$ be a ball of center $x$ and radius $r$. If some $\cl{U_{i}}$ intersects $B$ then $\cl{U_{i}}$ is contained in the ball $B'$ of center $x$ and radius $r(1+2c_{2})$. If $q$ different $\cl{U_{i}}$ intersect $B$ then there are $q$ disjoint balls of radius $c_{1}r$ inside $B'$, and by comparing volumes $q\leqslant (1+2c_{2})^{d}c_{1}^{-d}$.
\end{proof}
\end{lemma}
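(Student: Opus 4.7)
The plan is a standard packing-and-volume-comparison argument. Let $B=B(x,r)$ be an arbitrary ball of radius $r$, and suppose $q$ of the sets $\cl{U_i}$ meet $B$; the goal is to bound $q$.

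First, I would enlarge $B$ just enough to contain every $\cl{U_i}$ that touches it. For each such $i$, by hypothesis $\cl{U_i}$ lies inside some ball of radius $c_2 r$; since this ball must intersect $B(x,r)$, its center lies within $r+c_2 r$ of $x$, so the whole of $\cl{U_i}$ is contained in $B' := B(x,(1+2c_2)r)$ by the triangle inequality. Thus $B'$ contains all $q$ of the sets simultaneously.

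Next I would exploit the lower bound on size: each $U_i$ contains a ball of radius $c_1 r$, and because the $U_i$ are pairwise disjoint these inner balls are too. Hence $B'$ contains $q$ pairwise disjoint balls of radius $c_1 r$, and comparing Lebesgue measures gives $q\,\kappa_d (c_1 r)^d \leqslant \kappa_d ((1+2c_2)r)^d$, which simplifies to the desired bound $q\leqslant (1+2c_2)^d c_1^{-d}$.

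There is no real obstacle here: the only step that needs care is the enlargement constant, where one must remember that the containing ball of $\cl{U_i}$ is not centered at $x$, so the distance from $x$ to the farthest point of $\cl{U_i}$ picks up both the distance between centers (at most $(1+c_2)r$) and the radius $c_2 r$, giving $(1+2c_2)r$ rather than $(1+c_2)r$.
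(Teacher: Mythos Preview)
Your proof is correct and follows exactly the same packing-and-volume-comparison argument as the paper: enlarge $B(x,r)$ to $B'=B(x,(1+2c_2)r)$ so as to contain every $\cl{U_i}$ touching $B$, then compare the total volume of the $q$ disjoint inner balls of radius $c_1 r$ against $\Vol(B')$. The only difference is that you spell out the triangle-inequality computation for the enlargement constant and the explicit volume inequality, which the paper leaves implicit.
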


\begin{lemma}
\label{lemma2}
Suppose that $E$ and the $\phi_{i}$ satisfy the open set condition with  $U$. Then for every $r < 1$ we can find a finite set $\mathcal{A}$ of similarities $\Phi_{k}$ with ratios $\Lambda_{k}$ such that 
\begin{enumerate}
\item The $\Phi_{k}$ are composites of the $\phi_{i}$.
\item The $\Phi_{k}(E)$ cover $E$.
\item The $\Phi_{k}(U)$ are disjoint.
\item $\displaystyle\sum \Lambda_{k}^{s} =1$ where $s$ is the similarity dimension of $E$.
\item $\min_{i}(\lambda_{i})r \leqslant \Lambda_{k} < r$ for all $k$.
\end{enumerate}
 
\begin{proof}
We give an algorithmic proof. Initialise at step $0$ with $\mathcal{A}=\{Id\}$. At step $n$ replace every $\Phi \in \mathcal{A}$ with ratio greater than $r$ by the similarities $\Phi\circ\phi_{i}, i \in I$. Stop when the process becomes stationary, which will happen no later than step $\lceil\ln(r)/\ln({\max(\lambda_{i})})\rceil$.

\paragraph{} Obviously, point 1 is satisfied. We will prove the next three points by induction. At step $0$, all of $E$ is covered by the $\Phi_{k}(E)$, the $\Phi_{k}(U)$ are disjoint, and the $\Lambda_{k}^{s}$ sum up to $1$. The first property is preserved when $\Phi$ is replaced by the $\Phi\circ\phi_{i}$, since $\Phi(E)=\Phi(\psi(E))=\bigcup \Phi\circ\phi_{i}(E)$. Likewise, the $\Phi\circ\phi_{i}(U)$ are disjoint one from each other because $\Phi$ is one-to-one, and disjoint from the other $\Phi_{k}(U)$ because $\bigcup \Phi\circ\phi_{i}(U)= \Phi(\psi(U)) \subset \Phi(U)$, which yields point 3. For point 4 note that if $\Phi$ has ratio $\Lambda$, then the $\Phi\circ\phi_{i}$ have ratios $\Lambda\lambda_{i}$ and $\Lambda^{s}=\Lambda^{s}\sum\lambda_{i}^{s}=\sum(\Lambda\lambda_{i})^{s}$ so the sum of the $\Lambda_{k}^{s}$ remains unchanged by the substitution. Finally, since $r < 1$, every final set of the process has an ancestor with ratio greater than $r$. This gives the lower bound for point 5; the upper bound comes from the fact that the process ends.
\end{proof}
\end{lemma}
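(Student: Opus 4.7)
The plan is to construct $\mathcal{A}$ by a top-down refinement procedure driven by the ratios, very much in the spirit of how Moran-type constructions are truncated at a prescribed scale.

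Initialise $\mathcal{A}^{(0)}=\{\mathrm{Id}\}$. At step $n$, for every $\Phi \in \mathcal{A}^{(n)}$ with ratio $\Lambda \geqslant r$, remove $\Phi$ and insert in its place the $|I|$ similarities $\{\Phi\circ\phi_{i}:i\in I\}$; leave every $\Phi$ with ratio $<r$ untouched. The procedure terminates in finitely many steps because after $n$ refinement operations the ratio of any similarity still being refined is at most $(\max_{i}\lambda_{i})^{n}$, and $\max_{i}\lambda_{i}<1$. Let $\mathcal{A}$ be the final collection and denote its elements by $\Phi_{k}$ with ratios $\Lambda_{k}$.

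I would then verify the five properties by induction on the refinement step. Property~1 is immediate from the construction. For property~2, the self-similarity identity $E=\psi(E)=\bigcup_{i}\phi_{i}(E)$ yields $\Phi(E)=\bigcup_{i}\Phi\circ\phi_{i}(E)$, so the union of the $\Phi(E)$ over the current collection is invariant under one refinement step and equals $E$ throughout. For property~4, the identity $\sum_{i}\lambda_{i}^{s}=1$ gives $\Lambda^{s}=\sum_{i}(\Lambda\lambda_{i})^{s}$, so the sum $\sum \Lambda^{s}$ is conserved and equals its initial value $1$. Property~5 follows from the stopping rule for the upper bound, and for the lower bound from the observation that any $\Phi_{k}$ produced in the last relevant refinement is of the form $\Phi'\circ\phi_{i}$ where $\Phi'$ had ratio $\Lambda'\geqslant r$, whence $\Lambda_{k}=\Lambda'\lambda_{i}\geqslant (\min_{i}\lambda_{i})r$.

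The only delicate point, and the one I expect to be the main obstacle, is property~3: preserving pairwise disjointness of the images of $U$ across a refinement. Here the open set condition does the essential work. When $\Phi$ is replaced by the $\Phi\circ\phi_{i}$, the new images $\Phi\circ\phi_{i}(U)$ are mutually disjoint because the $\phi_{i}(U)$ are disjoint by the OSC and $\Phi$ is injective. To see that they do not collide with any unrefined $\Phi'(U)$ already in the collection, one uses the inclusion $\psi(U)\subset U$ to write
\begin{equation*}
\bigcup_{i}\Phi\circ\phi_{i}(U)=\Phi(\psi(U))\subset \Phi(U),
\end{equation*}
so the new sets live inside $\Phi(U)$, which was disjoint from the other $\Phi'(U)$ by the inductive hypothesis. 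This completes all five properties and the construction terminates with the required collection $\mathcal{A}$.
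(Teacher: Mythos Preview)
Your proof is correct and follows essentially the same approach as the paper: the same top-down refinement algorithm starting from $\{\mathrm{Id}\}$, the same inductive verification of properties 2--4 using $E=\psi(E)$, $\sum\lambda_{i}^{s}=1$, and the open set condition via $\Phi(\psi(U))\subset\Phi(U)$, and the same argument for the ratio bounds in property~5. The only cosmetic difference is that you single out property~3 as the delicate step and treat it last, whereas the paper handles the properties in order.
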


\begin{remark}
The process in the proof of Lemma~\ref{lemma2} is often resumed by the formula 
\begin{equation*}
\mathcal{A}=\{\phi_{i_{1}}\circ\phi_{i_{2}}\ldots \circ\phi_{i_{n}} \mid \prod_{k=1}^{n} \lambda_{i_{k}} < r \leqslant \prod_{k=1}^{n-1} \lambda_{i_{k}}\}.
\end{equation*}
\end{remark}

\subsection{Minkowski contents of self-similar sets}
\label{sec:MinkoContents}
Recall that the $s$-dimensional lower Minkowski content of a non-empty bounded set $E\subset \mathbb{R}^{d}$ can be defined as
\begin{equation*}
\liminf_{r>0}\frac{\Vol(E+B(0,r))}{r^{d-s}}.
\end{equation*}
Similarly, the $s$-dimensional upper Minkowski content of $E$ is 
\begin{equation*}
\limsup_{r>0}\frac{\Vol(E+B(0,r))}{r^{d-s}}.
\end{equation*}
In this paper, when both contents are finite and positive, we will simply say that $E$ has upper and lower Minkowski contents. This leaves no ambiguity on the choice of $s$, since in that case $s$ is necessarily the Minkowski dimension of $E$, i.e 
\begin{equation*}
s= d-\lim\limits_{r\rightarrow 0}\frac{\ln(\Vol(E+B(0,r)))}{\ln(r)}.
\end{equation*}
We show below that self-similar sets always have upper and lower Minkowski contents. One can find an alternative proof for the lower content in \cite[Paragraph 2.4]{Gat00}, it can also be considered a consequence of $\mathcal{H}^{s}(E)>0$, like suggested in \cite{Mattila}.
\begin{proposition}
\label{prop1}
Let $E$ be a self-similar set satisfying the open set condition with similarity dimension $s$. Then $E$ has finite positive $s$-dimensional upper and lower Minkowski contents.

\begin{proof}
As before, let $\phi_{i}$ be the generating similarities of $E$, $\lambda_{i}$ their ratios, $\psi:A\mapsto \bigcup \phi_{i}(A)$ the associated set transformation, and $U$ the open set of the open set condition. Choose any $0 < r < 1$ and define the $\Phi_{k},\Lambda_{k}$ as in Lemma~\ref{lemma2}. Finally, write $E_{k}=\Phi_{k}(E), U_{k}=\Phi_{k}(U)$.

\paragraph{}
We approximate $E+B(0,r)$ by the sets $E_{k}+B(0,r)$, who are similar to the $E+\Phi_{k}^{-1}(B(0,r))$. By construction $\Phi_{k}^{-1}(B(0,r))$ is a ball with a radius belonging to $[1,(\min_{i}\lambda_{i})^{-1}]$, so that$$\Vol(B(0,1))\leqslant \Vol(E+\Phi_{k}^{-1}(B(0,r))) \leqslant \Vol(B(0,\text{diam}(E)+(\min_{i}\lambda_{i})^{-1})),$$ because $E$ is not empty. Applying $\Phi _{k}$ gives
\begin{equation*}
c\Lambda_{k}^{d}\leqslant \Vol(E_{k}+B(0,r)) \leqslant C\Lambda_{k}^{d}.
\end{equation*}

Since $E+B(0,r) \subset \bigcup_{k} E_{k}+B(0,r)$ and $\sum\Lambda _{k}^{s}=1$ we immediately get the upper bound
\begin{align*}
\Vol(E+B(0,r)) \leqslant &\enspace \sum \Vol(E_{k}+B(0,r)) \\
\leqslant &\enspace \sum C\Lambda_{k}^{d} \\
\leqslant &\enspace C\sum \Lambda_{k}^{s}r^{d-s} \\
\leqslant &\enspace Cr^{d-s}.
\end{align*}

\paragraph{}
For the lower bound we apply Lemma~\ref{lemma1} to the disjoint $U_{k}$. Since $U$ is open we can put some ball of radius $c_{1}$ in $U$, and conversely we can put $U$ in some ball of radius $c_{2}$, since $U$ is bounded. This means that each of the $U_{k}$ contains a ball of radius $ r\min_{i}(\lambda_{i})c_{1}\leqslant \Lambda_{k}c_{1}$ and is contained in a ball of radius $rc_{2}\geqslant\Lambda_{k}c_{2}$. So for any $x\in E+B(0,r)$, $B(x,r)$ intersects at most $q$ of the $E_{k}$ (since $E_{k}\subset\cl{U_{k}}$) with $q$ a positive integer independent of $r$ and $x$. This can be rewritten $\displaystyle\mathbf{1}_{E+B(0,r)}\geqslant \frac{1}{q}\sum\mathbf{1}_{E_{k}+B(0,r)}$. Integrating we get $\displaystyle \Vol(E+B(0,r))\geqslant \frac{1}{q} \sum \Vol(E_{k}+B(0,r))$ so that

\begin{align*}
\Vol(E+B(0,r)) \geqslant &\enspace\frac{1}{q} \sum \Vol(E_{k}+B(0,r)) \\
\geqslant &\enspace\frac{c}{q} \sum \Lambda_{k}^d \\
\geqslant &\enspace\frac{c}{q}(\min_{i}\lambda_{i})^{d-s} \sum \Lambda_{k}^{s}r^{d-s} \\
\geqslant &\enspace cr^{d-s}. \\
\end{align*}
\end{proof}
\end{proposition}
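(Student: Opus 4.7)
The plan is to exploit the self-similar structure of $E$ by decomposing it at scale $r$ via Lemma~\ref{lemma2}: for any $r<1$, this produces similitudes $\Phi_{k}$ with ratios $\Lambda_{k}\in[r\min_{i}\lambda_{i},\,r)$ such that the images $E_{k}=\Phi_{k}(E)$ cover $E$, the images $U_{k}=\Phi_{k}(U)$ are pairwise disjoint, and $\sum_{k}\Lambda_{k}^{s}=1$. Intuitively, this presents $E$ as roughly $r^{-s}$ scaled copies of itself of linear size of order $r$, which is precisely the scale at which the $r$-tube $E+B(0,r)$ naturally lives.

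First I would estimate $\Vol(E_{k}+B(0,r))$ piece by piece. Scaling back by $\Phi_{k}^{-1}$ turns this into $\Lambda_{k}^{d}\Vol(E+B(0,r/\Lambda_{k}))$; since $r/\Lambda_{k}\in(1,(\min_{i}\lambda_{i})^{-1}]$, the inner volume is sandwiched between two positive constants depending only on $E$ and the $\lambda_{i}$ (bounded below by $\kappa_{d}$ since $E$ is non-empty, and above by the volume of a ball of radius $\text{diam}(E)+(\min_{i}\lambda_{i})^{-1}$). Hence $c\Lambda_{k}^{d}\leqslant\Vol(E_{k}+B(0,r))\leqslant C\Lambda_{k}^{d}$ uniformly in $k$ and $r$. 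The upper bound then drops out quickly: the cover $E+B(0,r)\subset\bigcup_{k}(E_{k}+B(0,r))$ combined with subadditivity, $\Lambda_{k}<r$, $d>s$, and $\sum_{k}\Lambda_{k}^{s}=1$ gives $\Vol(E+B(0,r))\leqslant C\sum_{k}\Lambda_{k}^{s}\Lambda_{k}^{d-s}\leqslant Cr^{d-s}$.

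The lower bound is the main obstacle, since the reverse of the cover inequality is not automatic: one must check that the pieces $E_{k}+B(0,r)$ do not overlap too much. Here I would rely on the open set condition, which implies $E\subset\cl{U}$ and therefore $E_{k}\subset\cl{U_{k}}$. The sets $U_{k}$ are pairwise disjoint and, since $U$ is a bounded open set containing some ball of radius $c_{1}$ and contained in some ball of radius $c_{2}$, each $\cl{U_{k}}$ contains a ball of radius of order $\Lambda_{k}$ and sits in a ball of the same order; because $\Lambda_{k}$ is comparable to $r$, Lemma~\ref{lemma1} ensures that any ball of radius $r$ meets at most a constant number $q$ of the $\cl{U_{k}}$, with $q$ independent of $r$. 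Consequently $\mathbf{1}_{E+B(0,r)}\geqslant q^{-1}\sum_{k}\mathbf{1}_{E_{k}+B(0,r)}$; integrating and using the per-piece lower bound $\Vol(E_{k}+B(0,r))\geqslant c\Lambda_{k}^{d}$ together with $\Lambda_{k}\geqslant r\min_{i}\lambda_{i}$ yields $\Vol(E+B(0,r))\geqslant cr^{d-s}$, which completes the argument.
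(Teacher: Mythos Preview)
Your proposal is correct and follows essentially the same route as the paper's own proof: the same decomposition via Lemma~\ref{lemma2}, the same per-piece estimate $c\Lambda_{k}^{d}\leqslant\Vol(E_{k}+B(0,r))\leqslant C\Lambda_{k}^{d}$ obtained by rescaling, the same upper bound via subadditivity and $\sum_{k}\Lambda_{k}^{s}=1$, and the same lower bound via Lemma~\ref{lemma1} applied to the disjoint $U_{k}$ to control the overlap of the $E_{k}+B(0,r)$. The only differences are cosmetic (you phrase the scaling as $\Vol(E_{k}+B(0,r))=\Lambda_{k}^{d}\Vol(E+B(0,r/\Lambda_{k}))$ while the paper speaks of $E+\Phi_{k}^{-1}(B(0,r))$, and you are slightly more explicit about why $\Lambda_{k}^{d-s}\leqslant r^{d-s}$).
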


\subsection{Boundary regularity}
\label{sec:boundary-density}

In order to formulate our result, we  introduce the notion of \emph{proper} and \emph{improper} points. A point $x\in \mathbb{R}^{d}$ is proper to $K$ if $\Vol(O\cap K) > 0$ for every neighbourhood $O$ of $x$, it is improper to $K$ otherwise. In other words, the set $K^{\text{prop}}$ of proper points of $K$ is the support of the measure $\Vol(K\cap \cdot)$. Further use of proper points will be made in Section~\ref{essential2}. We can already note that $K$ must have no improper points if we want a positive lower bound for the $f_{r}$ on $K$. 

Our result holds for self-similar subsets $E$ of $\partial K$ satisfying the following assumption:

\begin{assumption}
\label{ass:1}
A {self-similar} subset $E$ of $\partial K$ satisfies the strong open set condition with some set $U$ such that $U\cap \partial K\subset E$ and  $U \setminus \partial K$ has finitely many connected components.
\end{assumption}

This assumption can be justified heuristically: if $E$ cuts its neighbourhood into infinitely many connected components, then because of self-similarity it also does so locally, and $K$ and $K^{c}$ are too disconnected to contain the balls mentioned in Theorem~\ref{theorem1}. Example~\ref{cantor-example} will show that these concerns are legitimate.

\begin{theorem}
\label{theorem1}
Let $K$ be a non-empty compact set with no improper points and $\Vol(\partial K)=0$. Let $E$ be a self-similar subset of $\partial K$  for which Assumption \ref{ass:1} holds. Then there are constants $\delta, \varepsilon >0$ such that, for all $r< \delta, x\in E$, both $B(x,r)\cap K^{c}$ and $B(x,r)\cap K$ contain a ball of radius $\varepsilon r$.
\end{theorem}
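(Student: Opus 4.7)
The plan is to reduce the problem at a general point $x \in E$ and scale $r$ to a fixed configuration inside $U$, transported to a neighborhood of $x$ by a single similarity from the iterated function system with ratio comparable to $r$. Since $E$ is compact (as the fixed point of the IFS in $\mathcal{K}^{d}$) and $U \cap \partial K \subset E$, the open set $U \setminus \partial K$ coincides with $U \setminus E$ and, by Assumption~\ref{ass:1}, has finitely many connected components $C_{1}, \ldots, C_{m}$; let $\rho_{0} > 0$ be a common lower bound on the radii of balls that can be inscribed in the $C_{j}$, and set $D = \text{diam}(U) + \rho_{0}$. For $r < D$, I apply Lemma~\ref{lemma2} at scale $r/D$ to obtain similarities $\Phi_{k}$ of ratios $\Lambda_{k} \in [(\min_{i} \lambda_{i})\, r/D,\, r/D)$ such that $\bigcup_{k} \Phi_{k}(E) \supset E$ and the $\Phi_{k}(U)$ are pairwise disjoint, and I pick $k$ with $x \in \Phi_{k}(E)$.

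The crucial step is to establish $E \cap \Phi_{k}(U) = \Phi_{k}(E \cap U)$. Iterating the inclusion $\psi(U) \subset U$ given by the open set condition shows $\Phi_{k}(U) \subset U$, whence $\Phi_{k}(U) \cap \partial K \subset E$. For the non-trivial inclusion, I rely on the elementary observation that two disjoint open subsets $V_{1}, V_{2}$ of $\mathbb{R}^{d}$ automatically satisfy $\cl{V_{1}} \cap V_{2} = \emptyset$ (any point $y$ in that intersection would have an open neighborhood contained in $V_{2}$ that must meet $V_{1}$, contradicting disjointness). Applied to $V_{1} = \Phi_{k'}(U)$ and $V_{2} = \Phi_{k}(U)$ for $k' \ne k$, it gives $\Phi_{k'}(E) \cap \Phi_{k}(U) \subset \cl{\Phi_{k'}(U)} \cap \Phi_{k}(U) = \emptyset$; combined with the covering $E = \bigcup_{k'} \Phi_{k'}(E)$, this yields the equality. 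Consequently $\Phi_{k}(U) \setminus \partial K = \Phi_{k}(U \setminus E)$ decomposes into exactly $m$ connected components $\Phi_{k}(C_{1}), \ldots, \Phi_{k}(C_{m})$.

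Each $\Phi_{k}(C_{j})$ is connected and disjoint from $\partial K$, so lies entirely in $\ins{K}$ or in $K^{c}$. The strong open set condition ensures that $\Phi_{k}(E \cap U)$ is a nonempty subset of $\partial K$ inside $\Phi_{k}(U)$, and then the no-improper-points assumption, together with $K^{c}$ being open, forces both $\ins{K} \cap \Phi_{k}(U)$ and $K^{c} \cap \Phi_{k}(U)$ to be nonempty open sets. Therefore some $C_{j_{+}}$ maps into $\ins{K}$ and some $C_{j_{-}}$ into $K^{c}$; picking inscribed balls $B(y_{\pm}, \rho_{0}) \subset C_{j_{\pm}}$ and applying $\Phi_{k}$ produces balls $B(\Phi_{k}(y_{\pm}), \Lambda_{k} \rho_{0})$ inside $\ins{K}$ and $K^{c}$ respectively, of radius $\geqslant (\min_{i} \lambda_{i}) \rho_{0}\, r / D =: \varepsilon r$. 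Finally, since $x$ and $\Phi_{k}(y_{\pm})$ both lie in $\Phi_{k}(\cl{U})$, whose diameter equals $\Lambda_{k} \text{diam}(U)$, both balls sit in $B(x, \Lambda_{k} D) \subset B(x, r)$, establishing the theorem with $\delta = D$.

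The principal obstacle is precisely the identification $E \cap \Phi_{k}(U) = \Phi_{k}(E \cap U)$: without it, stray pieces of $E$ coming from images $\Phi_{k'}(E)$ with $k' \ne k$ could cut $\Phi_{k}(U)$ into arbitrarily small components and destroy the uniform control provided by the finite number $m$ in Assumption~\ref{ass:1}. The observation that disjoint open sets do not meet each other's closures is the clean resolution that makes the scaling argument work uniformly over $k$, and thus over $x$ and $r$.
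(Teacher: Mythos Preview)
Your proof is correct and follows essentially the same route as the paper's: apply Lemma~\ref{lemma2} to find a similarity $\Phi_{k}$ with $x\in\Phi_{k}(E)$ and ratio comparable to $r$, establish $E\cap\Phi_{k}(U)=\Phi_{k}(E\cap U)$ via the observation that disjoint open sets avoid each other's closures, and then use connectedness of the $\Phi_{k}(C_{j})$ together with the no-improper-points hypothesis and $\Vol(\partial K)=0$ to locate one component in $K$ and one in $K^{c}$. Your scaling $r/D$ with $D=\text{diam}(U)+\rho_{0}$ is in fact a bit more careful than the paper's (which tacitly absorbs the diameter of $U$ into the unspecified $\delta$), and guarantees cleanly that the inscribed balls land inside $B(x,r)$.
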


\begin{proof}
Let $\phi_{i}$ be the generating similarities of $E$, $\lambda_{i}$ their ratios, $\psi:A\mapsto \bigcup \phi_{i}(A)$ the associated set transformation. {Denote by} $V_{j}$ the connected components of $U\setminus E$. Since there are finitely many of them, we can suppose they all contain a ball of radius $\tau>0$. Fix any $0 < r < 1$ and $x \in E$.

\paragraph{} Lemma~\ref{lemma2} shows that there is a similarity $\Phi$ with ratio $\Lambda$ such that $\min(\lambda_{i})r \leqslant \Lambda < r$ and $x \in \Phi(E)$. It follows that $\Phi(U) \subset B(x,r)$.  We also have $\Phi(U)\cap \partial K = \Phi(U)\cap E = \Phi(U\cap E)$. Indeed, for any point $x'$ of $E$ outside $\Phi(E)$ there is another similarity $\Phi'$ of Lemma~\ref{lemma2} such that $x'\in\Phi'(E)$ and $\Phi'(U)\cap\Phi(U)=\emptyset$, which implies $\cl{\Phi'(U)}\cap \Phi(U)=\Phi'(\cl{U})\cap\Phi(U)=\Phi'(E)\cap \Phi(U)=\emptyset$ so that $x' \notin \Phi(U)$.

\paragraph{}Consequently, for all $j$, $\Phi(V_{j})$ has no intersection with $\partial K$. So $\Phi(V_{j})\cap \ins{K}$ and $\Phi(V_{j})\cap K^{c}$ are two disjoint open set sets who cover $\Phi(V_{j})$, and we must have either $\Phi(V_{j}) \subset K$ or $\Phi(V_{j}) \subset K^{c}$.

\paragraph{}Since there is a point $y$ in $\Phi(U)\cap E$ and $K$ has no improper points, we must have $\Vol(K\cap U), \Vol(K^{c}\cap U) > 0$. Because $\Vol(E)=0$, this can only happen if one of the $\Phi(V_{j})$ is included in $K^{c}$ and another in $K$. Hence $B(x,r)\cap K$, $B(x,r)\cap K^{c}$ each contain a ball of radius $\Lambda\tau$. Since $\Lambda\geqslant \min(\lambda_{i})r$, the conclusion of the theorem holds with $\varepsilon=\min(\lambda_{i})\tau$.
\end{proof}

\begin{remark}
The theorem implies that $K,K^{c}$ have lower density bounds on $E$. More precisely, for appropriate $\delta, \varepsilon > 0$
\begin{align}
\label{eq:positive-density-boundary}
\forall  x\in E, r < \delta, \quad f_{r}(x), g_{r}(x)  \geqslant \varepsilon.
\end{align}
This weaker statement is enough for our purposes regarding Voronoi approximation.
\end{remark}

\paragraph{}
We show below that the Von Koch flake provides a concrete example of an irregular set satisfying the hypotheses of Theorem \ref{theorem1}.
\begin{example}
\label{flake}

Let $E$ be the self-similar set associated with the direct similarities $\phi_{i}:\mathbb{R}^{2} \rightarrow \mathbb{R}^{2}$ sending $S=A_{0}A_{4}$ to $a_{i}=A_{i-1}A_{i}$, for $i=1,2,3,4$, in the configuration of Figure~\ref{VKcurve}. Such sets $E$ are called Von Koch curves. Looking at the iterates $\psi^{(n)}(S)$ in Figure~\ref{VKiterates} gives an idea of the general form of the Von Koch curve and of why it is said to be self-similar.
\begin{figure}[!h]
\centering
\includegraphics[scale=0.8]{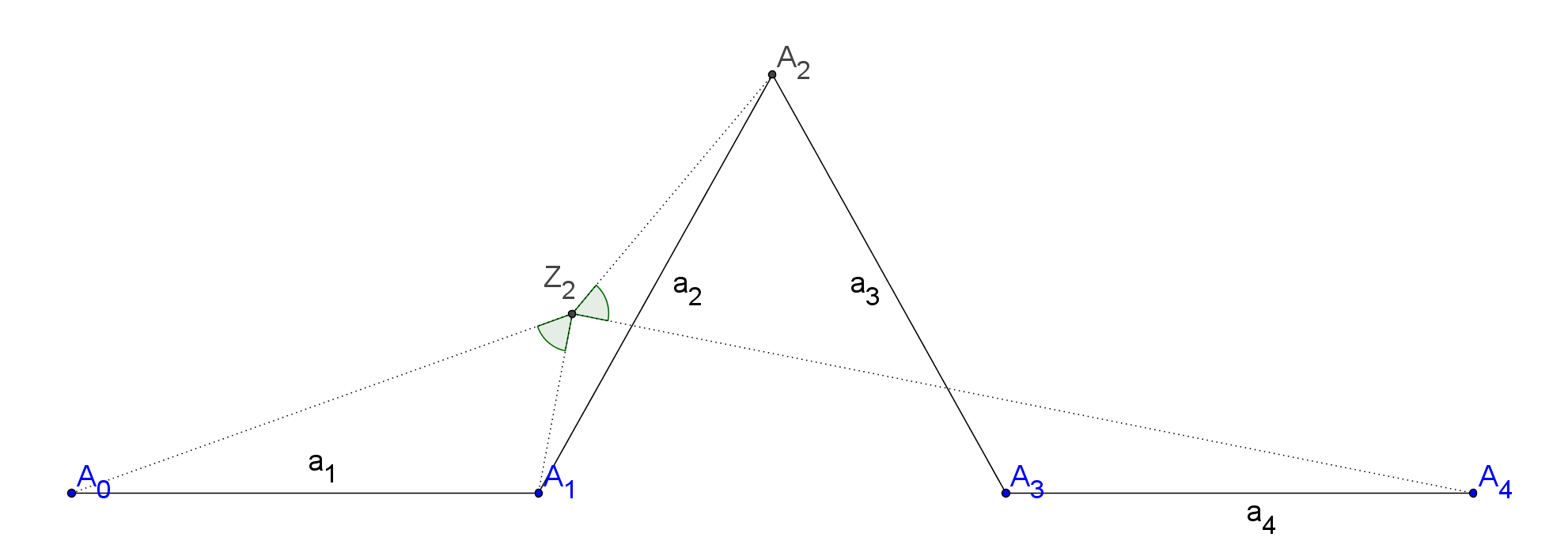}
\caption{\label{VKcurve}The generating similitudes of the Von Koch curve. $Z_{2}$ is the center of the similarity $\phi_{2}$.}
\end{figure}

\begin{figure}[!h]
\centering
\includegraphics[scale=0.5]{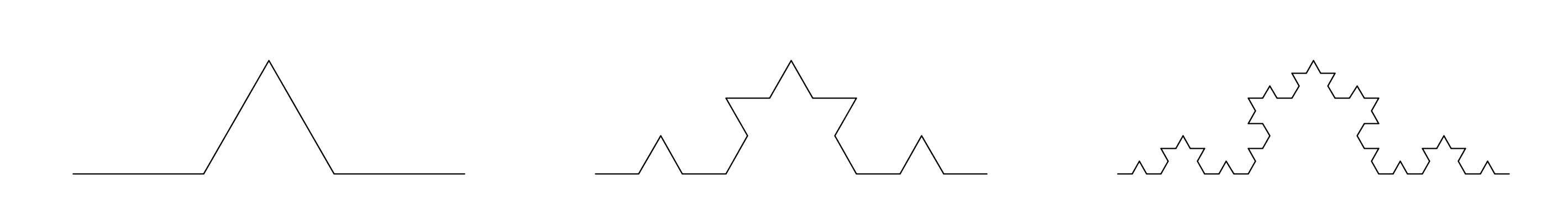}
\caption{\label{VKiterates}The sets $\psi^{(1)}(S),\psi^{(2)}(S),\psi^{(3)}(S)$.}
\end{figure}

\paragraph{}
Note that the $\psi^{(n)}(S)$ are curves, i.e the images of continuous mappings $\gamma_{n}:[0,1]\rightarrow \mathbb{R}^{2}$. The $\gamma_{n}$ can be chosen to be a Cauchy sequence for the uniform distance between curves in $\mathbb{R}^{2}$. Hence their limit $\gamma$ is also a continuous mapping, $\gamma([0,1])$ is compact and has distance $0$ with $E$ in the Hausdorff metric, so $\gamma([0,1])=E$ which proves that the Van Koch curve is, indeed, a curve. It can also be shown to be a non-intersecting curve (the image of an injective continuous mapping from $[0,1]$ into $\mathbb{R}^{2}$).

\paragraph{}
With a similar reasoning, if we stick three Von Koch curves of same size as in Figure~\ref{VKflake}, we get a closed non-intersecting curve $\mathcal{C}$. Jordan's curve theorem says $\mathbb{R}^{2}\setminus \mathcal{C}$ has exactly two connected components who both have $\mathcal{C}$ as topological boundary. The closure $K$ of the bounded component is a compact set with no improper points satisfying $\partial K =\mathcal{C}$. $K$ is called a Von Koch flake.
\begin{figure}[!h]
\centering
\includegraphics[scale=0.5]{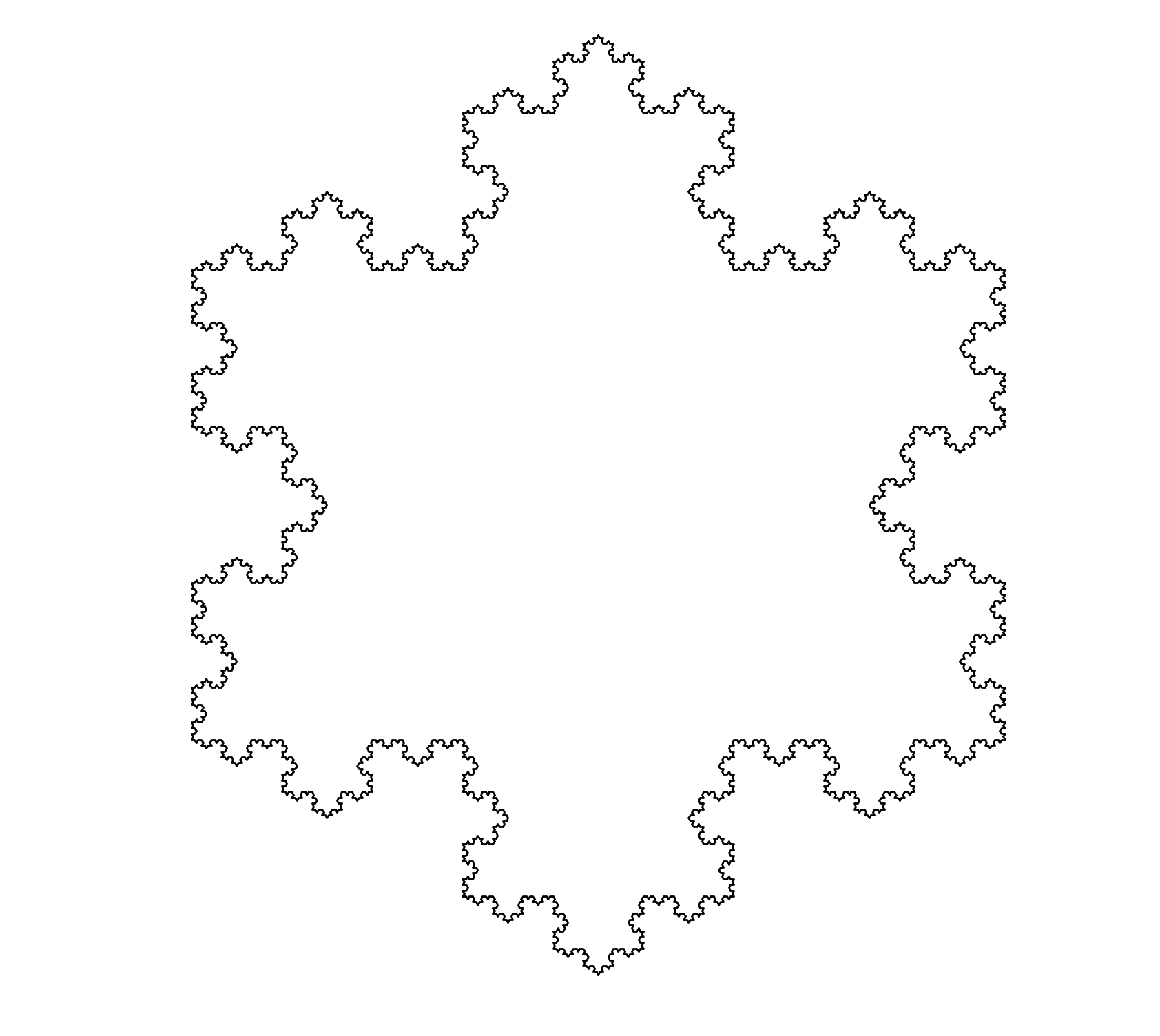}
\caption{\label{VKflake} The boundary of the Von Koch flake $K$.}
\end{figure}

\paragraph{}
Now, construct kites $C_{1},C_{2},C_{3}$ on each of the Von Koch curves $E_{1},E_{2},E_{3}$ making $\partial K$ as in Figure~\ref{VKkites}. It is easy to see that as long as the two equal angles of the lower triangle are flat enough, $C_{i}\cap\partial K=C_{i}\cap E_{i}$. Furthermore, applying Jordan's curve theorem to the $E_{i}$ and the two upper (resp. lower) edges of the corresponding $C_{i}$ shows that the $C_{i}\setminus E_{i}$ have exactly two connected components. The strong open set condition is also satisfied, so Theorem~\ref{theorem1} can be applied three times to obtain lower bounds for $f_{r}$ and $g_{r}$ on $\partial K$.

\begin{figure}[!h]
\centering
\includegraphics[scale=0.5]{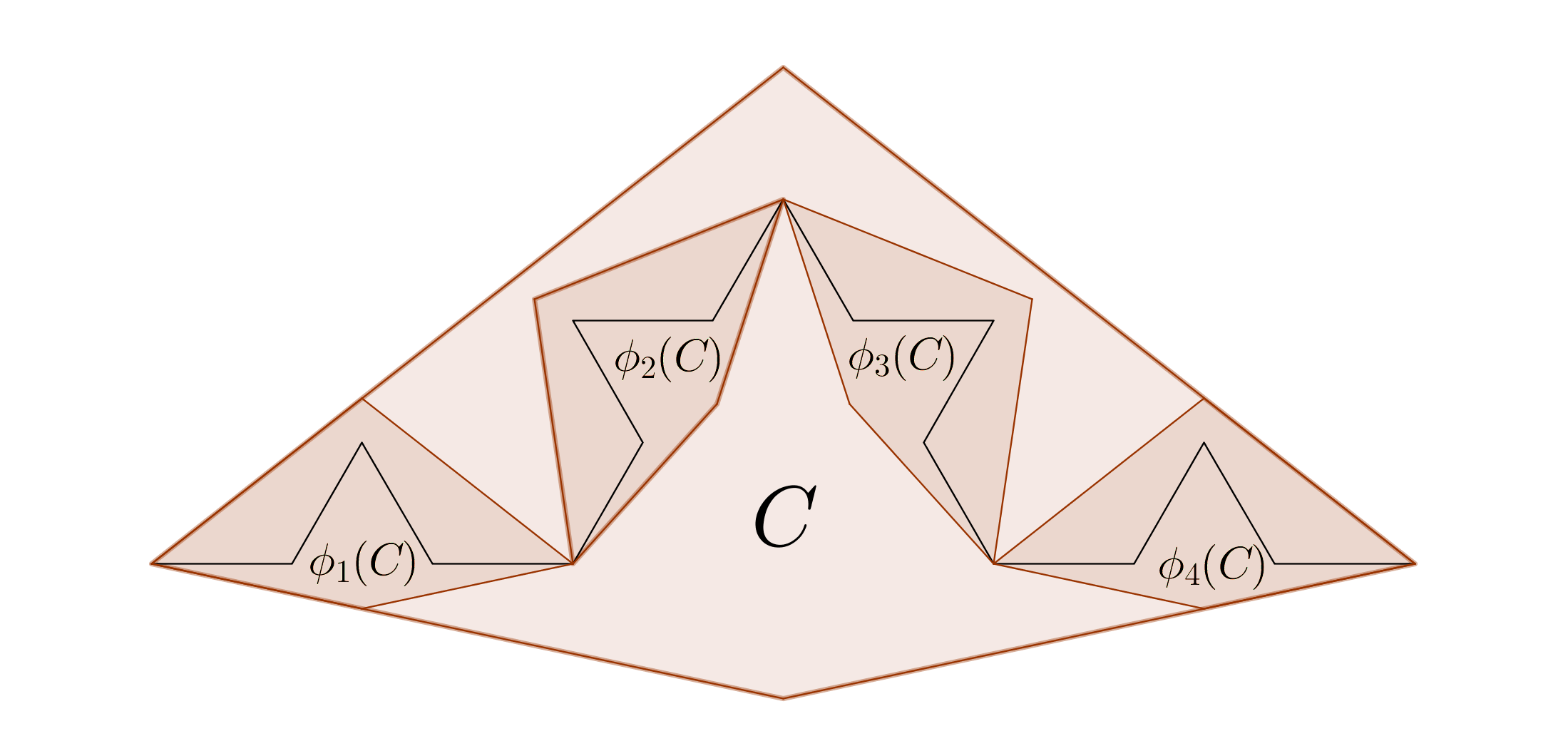}
\caption{\label{VKkites} Assumption~\ref{ass:1} is satisfied with the kite $C$.}
\end{figure}
\end{example}

Theorem~\ref{theorem1} is only concerned with the behaviour of $f_{r}$ and $g_{r}$ on $\partial K$, whereas standardness assumption require lower bounds on all of $K$ and $K^{c}$ respectively. The following lemma takes care of this issue.
\begin{lemma}
\label{lem:inside}
If for all $r<\delta$ we have $f_{r} \geqslant \varepsilon$ on $\partial K$, then for all $r<\delta$ we have $f_{r} \geqslant 2^{-d}\varepsilon$ on $K$. The same result holds if $f_{r}$ is replaced by $g_{r}$ and $K$ by $K^{c}$.
\begin{proof}
If $x$ is in $\partial K_{r/2}$ then $B(x,r)$ contains a ball of radius $r/2$ centered on $x'\in\partial K$, so $f_{r}(x)=\Vol(K\cap B(x,r))\kappa_{d}^{-1}r^{-d} \geqslant \Vol(K\cap B(x',r/2))\kappa_{d}^{-1}r^{-d} \geqslant \varepsilon 2^{-d}$. If $x$ is in $K\setminus \partial K_{r/2}$ then the ball $B(x,r/2)$ is contained in $K$ so that $f_{r}(x) \geqslant 2^{-d} \geqslant \varepsilon 2^{-d}$. So in all cases, if $x\in K$ then $f_{r}(x)\geqslant \varepsilon 2^{-d}$. Replacing $K$ by $K^{c}$ gives the result regarding $g_{r}$.
\end{proof}
\end{lemma}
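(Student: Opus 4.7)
The plan is to reduce the lower bound on $K$ to the boundary lower bound by the triangle inequality, splitting $K$ according to distance from $\partial K$.

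First I would fix $r<\delta$ and $x\in K$ and consider two cases depending on whether $x$ lies in the tube $\partial K_{r/2}:=\{y:d(y,\partial K)\leqslant r/2\}$ or not. If $x\notin \partial K_{r/2}$, then $B(x,r/2)$ is disjoint from $\partial K$. Since $x\in K$ and $B(x,r/2)$ is connected and cannot meet $\partial K$, the whole ball lies in $K$ (here I use that $K$ is closed, so $\cl{K}\cap K^{c}=\emptyset$ away from $\partial K$). Comparing volumes gives $f_{r}(x)\geqslant \Vol(B(x,r/2))/\Vol(B(x,r))=2^{-d}\geqslant 2^{-d}\varepsilon$, since $\varepsilon\leqslant 1$.

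Second, if $x\in \partial K_{r/2}$, pick a boundary point $x'\in\partial K$ with $|x-x'|\leqslant r/2$. By the triangle inequality $B(x',r/2)\subset B(x,r)$, so
\begin{equation*}
\Vol(K\cap B(x,r))\geqslant \Vol(K\cap B(x',r/2)) = f_{r/2}(x')\,\kappa_{d}(r/2)^{d}.
\end{equation*}
Since $r/2<\delta$ and $x'\in\partial K$, the hypothesis gives $f_{r/2}(x')\geqslant \varepsilon$, hence $f_{r}(x)\geqslant 2^{-d}\varepsilon$ after dividing by $\Vol(B(x,r))=\kappa_{d}r^{d}$.

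Combining the two cases yields the claim on $K$. The $g_{r}$ statement is obtained verbatim by replacing $K$ with $K^{c}$ throughout (the argument only uses that $K$ is a Borel set and that $\partial K=\partial K^{c}$). There is no real obstacle here; the only point worth checking is that the ball centered at the nearby boundary point $x'$ actually fits inside $B(x,r)$, which is the reason one chooses the auxiliary radius $r/2$ and picks up the factor $2^{-d}$.
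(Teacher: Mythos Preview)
Your proof is correct and follows exactly the same approach as the paper: the same case split according to whether $x$ lies in $\partial K_{r/2}$, the same use of a nearby boundary point $x'$ with $B(x',r/2)\subset B(x,r)$ in the first case, and the same observation that $B(x,r/2)\subset K$ in the second. Your write-up is in fact slightly more explicit (e.g.\ justifying $\varepsilon\leqslant 1$ and spelling out the connectedness argument for $B(x,r/2)\subset K$).
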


\section{Voronoi approximation}
\label{sec:voronoi}

In this section, $\chi$ is a locally finite point process {, and $n\geqslant 1$}. If $\chi=\chi_{n} =\{X_{1},X_{2},\ldots,X_{n}\}$, where the $X_{i}$ are iid random points uniformly distributed over $[0,1]^{d}$, we speak of binomial input; if $\chi=\chi'_{\lambda }$ is a homogenous Poisson point process of intensity $\lambda >0$ we speak of Poisson input.

Define the Voronoi cell $\upsilon_{\chi}(x)$  of nucleus $x$ with respect to $\chi$ as the closed set of points closer to $x$ than to $\chi$ 
\begin{equation*}
\upsilon_{\chi}(x)=\{y \in \mathbb{R}^{d} : \forall x' \in \chi, d(x,y)\leqslant d(x',y)\}.
\end{equation*}
\paragraph{}
The Voronoi approximation $K_{\chi}$ of $K$ is the closed set of all points which are closer to $K\cap \chi$ than to $K^{c} \cap \chi$. Its name comes from the relation
\begin{equation*}
K_{\chi}=\bigcup_{x \in \chi \cap K} \upsilon_{\chi}(x).
\end{equation*}

The volume $\varphi (\chi )=\Vol(K_{\chi })$ first arised in \cite{KhmTor} as discriminating statistics in the two-sample problem. These authors proved a strong law of large numbers in dimension $1$ for the volume approximation. Explicit rates of convergence in higher dimensions were obtained by Reitzner and Heveling \cite{HevRei}, who proved that if $K$ is convex and compact and $\X=\X'_{\lambda}$ then
\begin{align*}
\E \varphi(\X)= & \enspace \Vol(K), \\
\Var(\varphi(\X))\leqslant & \enspace  C\lambda ^{-1-1/d}S(K), \\
\end{align*} where $S(K)$ is the surface area of $K$, all constants can be made explicit and depend only on $d$. They also studied the quantity $ \varphi _{\text{Per}}(\chi)=\Vol(K\Delta K_{\chi })$ to estimate the perimeter, after suitable renormalisation. Reitzner, Spodarev and Zaporozhets \cite{ReiSpoZap}  extended these results to sets  with finite variational perimeter, and also gave upper bounds for $\E|\varphi(\X'_{\lambda})^{q}-\Vol(K)^{q} | $ for $q\geq 1$. Schulte \cite{Sch12} obtained a matching lower bound for the variance with convex $K$, i.e. $cS(K)\lambda ^{-1-1/d}\leq \Var(\varphi(\X))$, and derived the corresponding CLT
\begin{align*}
\frac{\varphi(\X)-\E \varphi(\X)}{\sqrt{\Var(\varphi  (\X)})} \overset{(d)}{\longrightarrow} N.
\end{align*}
Very recently, Yukich \cite{Yuk15} gave quantitative Berry-Esseen bounds for this CLT similar to the ones that are stated here for binomial input.

When dealing with binomial input, which has been less studied than Poisson input, it is necessary to assume that $K\subset (0,1)^{d}$ and redefine $K_{\chi}$ as
\begin{equation*}
K_{\chi}=\bigcup_{x \in \chi \cap K} \upsilon_{\chi}(x) \cap [0,1]^{d},
\end{equation*}
in order to avoid trivial complications due to possibly infinite cells. Penrose \cite{Pen07} proved the remarkable fact that for $\chi=\X_{n}$
\begin{align*}
\E \varphi(\chi) \to & \enspace \Vol(K), \\
\E (\varphi_{\text{Per}}(\chi)) \to & \enspace 0,
\end{align*}
almost surely, with no need for assumptions on $K$'s shape.

\paragraph{}
To further assess the quality of the approximation with binomial input, we must quantify the previous convergence. The unbiasedness of the Poisson case does not occur with binomial input, mainly because of edge effects. Nevertheless those effects seem to decrease exponentially with the distance, like is customary for Voronoi cells.  The following result shows that the bias of the estimator $\varphi (\chi _{n})$ decreases geometrically with $n$, therefore it is negligible with respect to the standard deviation, as shown in the following sections. Also, it still holds when $(0,1)^{d}$ is replaced by an arbitrary set $U$ containing $K$  in its interior.

\begin{theorem}
\label{th:voronoi-bias}
Assume that $K$ is a compact set with positive volume and  let $U$ be an open set containing $K$. Let $\chi _{n}=\{X_{i},1\leqslant i\leqslant n\}$ be iid uniform variables on $U$. Then there is a constant $0<c<1$ depending only on $K$ and $d$ such that for $n\geqslant 1,$
\begin{align*}
\left| \E\Vol(K_{\chi _{n}})-\Vol(K) \right|\leqslant c^{n} .
\end{align*}
\end{theorem}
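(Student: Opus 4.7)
My plan is to express the bias $\E\Vol(K_{\chi_n})-\Vol(K)$ as a double integral over $K\times(U\setminus K)$ whose integrand vanishes on the bulk of the integration domain by a radial-symmetry argument; what remains is supported on pairs $(x,y)$ that are forced to be at macroscopic distance, so the integrand decays geometrically in $n$.

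First I would derive the identity
\[ \P(y\in K_{\chi_n})=\frac{n}{v}\int_K\bigl(1-V_y(\|x-y\|)/v\bigr)^{n-1}\,dx,\qquad y\in U, \]
with $v=\Vol(U)$ and $V_z(r):=\Vol(B(z,r)\cap U)$, by writing $\{y\in K_{\chi_n}\}$ as ``the nearest $X_i$ to $y$ lies in $K$'' and conditioning on which index is closest. The companion identity $\frac{n}{v}\int_U(1-V_y(\|x-y\|)/v)^{n-1}\,dx=1$ follows from observing that $V_y(\|X-y\|)/v$ is uniform on $[0,1]$ when $X$ is uniform on $U$, so the integral reduces to $n\int_0^1(1-u)^{n-1}du=1$. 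Subtracting $\mathbf{1}_K(y)$, integrating over $y\in U$, and relabelling dummy variables in the contribution from $y\in U\setminus K$ then yields
\[ \E\Vol(K_{\chi_n})-\Vol(K)=\frac{n}{v}\int_K\!\int_{U\setminus K}\!\!\Bigl[(1-V_x(r)/v)^{n-1}-(1-V_y(r)/v)^{n-1}\Bigr]\,dx\,dy \]
with $r:=\|x-y\|$.

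The second step exploits the cancellation: whenever $r<\min(d(x,U^c),d(y,U^c))$ the balls $B(x,r),B(y,r)$ are contained in $U$, so $V_x(r)=V_y(r)=\kappa_d r^d$ and the integrand vanishes. Setting $\delta:=d(K,U^c)>0$ (positive by compactness of $K$ and openness of $U$), every $x\in K$ satisfies $d(x,U^c)\geqslant\delta$, and a short case analysis using the triangle inequality $d(y,K)\geqslant\delta-d(y,U^c)$ for $y\in U\setminus K$ shows that the integrand must vanish unless $r\geqslant\delta/2$: if $d(y,U^c)\geqslant\delta/2$ the ``both balls in $U$'' condition holds, while if $d(y,U^c)<\delta/2$ then $d(y,K)>\delta/2$ forces $r>\delta/2$ directly.

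Finally, on the effective domain $r\geqslant\delta/2$, the inclusion $B(x,\delta/2)\subset U$ gives $V_x(r)\geqslant\kappa_d(\delta/2)^d$, hence $(1-V_x(r)/v)^{n-1}\leqslant c_1^{n-1}$ for $c_1:=1-\kappa_d(\delta/2)^d/v\in(0,1)$; an analogous bound $(1-V_y(r)/v)^{n-1}\leqslant c_2^{n-1}$ follows from mild regularity of $U$ (convexity, or the cube setup $U=(0,1)^d$ used elsewhere in the paper). Integrating, the bias is at most $Cn\max(c_1,c_2)^{n-1}$, which is absorbed into a pure $\tilde c^n$ for any $\tilde c$ slightly larger than $\max(c_1,c_2)$ once $n$ is large, small $n$ being handled by the trivial bound $|\E\Vol(K_{\chi_n})-\Vol(K)|\leqslant v$. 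The main obstacle is the uniform lower bound on $V_y(r)$ for $y$ possibly close to $\partial U$: without some regularity of $U$ the constant cannot be made independent of $U$'s shape, which is why the statement is cleanest when $U$ is, say, convex, so that the constant $c$ depends only on $K$ and $d$.
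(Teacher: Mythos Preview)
Your symmetrisation is a valid and somewhat cleaner route than the paper's. The paper writes $\E\Vol(K_{\chi_n})=n\int_K\E\Vol(v_{\chi_{n-1}}(x)\cap U)\,dx$, splits the inner integral over $y\in B(x,r_0)$ versus $y\in U\setminus B(x,r_0)$ for a fixed $r_0<\tfrac12 d(K,U^c)$, evaluates the near part explicitly via polar coordinates to get $1/n$ minus a geometric remainder, and bounds the far part geometrically. Your companion identity and the relabelling over $K\times(U\setminus K)$ replace the polar-coordinate computation and produce the cancellation on the near region automatically; what survives is essentially the same far-region integral.

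However, the ``main obstacle'' you flag at the end is not an obstacle, and no regularity on $U$ is required. On your effective domain $r=\|x-y\|\geqslant\delta/2$ with $x\in K$, the ball $B(y,r)$ contains the ball $B(z,\delta/4)$, where $z$ is the point on the segment $[x,y]$ at distance $\delta/4$ from $x$; and $B(z,\delta/4)\subset B(x,\delta/2)\subset U$ because $d(x,U^c)\geqslant\delta$. Hence $V_y(r)\geqslant\kappa_d(\delta/4)^d$ uniformly, with no hypothesis on $U$ beyond openness. This is precisely the device the paper uses for its far part: rather than looking at $U$ near $y$, one places a sub-ball of $B(y,\|y-x\|)$ near $x\in K$, where it is guaranteed to sit inside $U$. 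With this correction your argument goes through for arbitrary open $U\supset K$, and the resulting constant depends only on $d$, $\Vol(U)$, $\Vol(K)$ and $\delta=d(K,U^c)$.
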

\begin{proof}
Let
$\chi_{k}=\{X_{i},i\leqslant k\}$. By homogeneity of the problem we can suppose $\Vol(U)=1$. The Voronoi approximation $K_{\chi_{n}}$ of $K$ satisfies
\begin{align}
\notag
\E(\Vol(K_{\chi_{n}})) = & \enspace \sum_{i=1}^{n} \E(1_{X_{i}\in K}\Vol(v_{\chi_{n}}(X_{i})\cap U))\\
\notag= & \enspace n \E(1_{X_{n}\in K}\Vol(v_{\chi_{n-1}}(X_{n})\cap U)) \\
\label{eq:exp-vol-K-chi-n}= & \enspace n \int_{K} \E\Vol(v_{\chi_{n-1}}(x)\cap U) dx.
\end{align}
Take $0<r<\frac{1}{2}d(K,U^{c})$. We have for all $x\in K$
\begin{align*}
\E(\Vol(v_{\chi_{n-1}}(x)\cap U))= & \enspace \E(\int_{U}1_{y\in v_{\chi_{n-1}}(x)})dy \\
= & \enspace \E(\int_{U} 1_{B(y,\|y-x\|)\cap \chi_{n-1}=\emptyset})dy \\
=& \enspace \int_{U} \P({B(y,\|y-x\|)\cap \chi_{n-1}=\emptyset})dy \\
=& \enspace \int_{U} (1-\Vol(B(y,\|y-x\|)\cap U))^{n-1}dy \\
= & \enspace \int_{B(x,r)}  (1-\kappa_{d}\|y-x\|^{d})^{n-1} dy+c_{n}
\end{align*}
where
\begin{align*}
c_{n}=\int_{U\setminus B(x,r)} (1-\Vol(B(y,\|y-x\|)\cap U))^{n-1} dy.
\end{align*}
For $y\in U\setminus B(x,r)$, let $B_{y}$ be the ball interiorly tangent to $B(y,\|y-x\|)$ with center on $[x,y]$  {and radius $r$. We have $B_{y}\subset B(y,\|y-x\|)$ by construction} and $B_{y}\subset U$ because $B_{y}\subset B(x,2r)$. It follows that 
\begin{align*}
c_{n}\leqslant \int_{U\setminus B(x,r)}(1-\Vol(B_{y}))^{n-1}dy=\int_{U\setminus B(x,r)}(1-\kappa_{d}r^{d})^{n-1}dy\leqslant c_{0}^{n}
\end{align*}for some $0<c_{0}<1$, noticing that $\kappa _{d}r^{d} <\Vol(U)\leqslant 1$ because $B(x,r)\subset U$.
From there, a polar change of coordinates yields
\begin{align*}
\E(\Vol(v_{\chi_{n-1}}(x)\cap U))  =&\int_{0}^{r}d\kappa_{d}t^{d-1}(1-\kappa_{d}t^{d})^{n-1} dt + c_{n}\quad \text{(because a $d$-sphere has surface $d\kappa_{d}$)} \\
=& \enspace \left[ -\frac{(1-\kappa_{d} t^{d})^{n}}{n}\right]_{0}^{r} + c_{n} \\
=& \enspace \frac{1}{n} +O(c^{n})
\end{align*}
for some $c\in (0,1)$. Reporting in \eqref{eq:exp-vol-K-chi-n} yields the result.
\end{proof}


Recalling that the estimator is unbiased if the underlying sample is Poisson in $\mathbb{R}^{d}$, this pleads in favor of Voronoi approximation against other estimators \cite{DevWise,Rod07} where the bias is not known and does not seem to be negligeable.

\subsection{Asymptotic normality }
\label{sec:volume-approx}
This subsection is concerned with the results of \cite{LacPec15}, where it is shown that with binomial input, the volume approximation $K_{\chi}$ is asymptotically normal when the number of points tends to $\infty $. Variance asymptotics and upper bounds on the speed of convergence for the Kolmogorov distance are also given.

We begin by stating the boundary regularity condition necessary for these results to hold, which is related to the boundary densities studied in the previous section. As explained in the introduction, it can be seen as a weakened form of the standardness assumption. Define, for all $r > 0$, the boundary neighbourhoods
\begin{align*}
\partial K_{r}= & \enspace \partial K + B(0,r),  \\
\partial K_{r}^{-}= & \enspace  \partial K_{r} \cap K, \\
\partial K_{r}^{+}= & \enspace \partial K_{r} \cap K^{c}.
\end{align*}

\begin{assumption}[Boundary permeability condition]
\label{roll}
A set $K$ with no improper points satisfies the boundary permeability condition whenever
\begin{equation}
\label{roll1}
\underset{r > 0}{\liminf} \enspace \frac{1}{\Vol(\partial K_{r})}\left(\int_{\partial K_{r}^{+}} f_{r}^{2}(x) \mathop{dx} + \int_{\partial K_{r}^{-}} g_{r}^{2}(x)\mathop{dx}\right) > 0.
\end{equation}
\end{assumption}
The following proposition gives a more meaningful equivalent for Assumption~\ref{roll}.

\begin{proposition} \label{roll-bis} Assumption~\ref{roll} holds if and only if
\begin{equation}
\label{roll2}
\liminf\limits_{r > 0} \frac{1}{\Vol(\partial K_{r})\kappa_{d}r^{d}}\int_{K\times K^{c}} 1_{||x-y|| \leqslant r} \mathop{dx} \mathop{dy} > 0.
\end{equation}
\begin{proof}
Let us begin by establishing the relation between the expression of (\ref{roll2}) and $K$'s boundary densities. By Fubini's theorem
\begin{align*}
\int_{K\times K^{c}} \frac{ \mathbf{1}_{||x-y|| \leqslant r}}{\kappa_{d}r^{d}} \mathop{dx} \mathop{dy} = & \enspace \int_{K} \frac{\Vol(B(x,r)\cap K^{c})}{\Vol(B(x,r))} \mathop{dx} \\
= & \enspace \int_{K^{c}} \frac{\Vol(B(x,r)\cap K)}{\Vol(B(x,r))} \mathop{dx},
\end{align*}
which rewrites simply as
\begin{equation}
\int_{K^{c}} f_{r}= \frac{1}{\kappa_{d}r^{d}}\int_{K\times K^{c}} \mathbf{1}_{||x-y|| \leqslant r} \mathop{dx} \mathop{dy} = \int_{K} g_{r},
\end{equation}
by definition of boundary densities.

\paragraph{}
Consider the function $h_{r}=\mathbf{1}_{K}g_{r}+\mathbf{1}_{K^{c}}f_{r}$. We have $0 \leqslant h_{r} \leqslant 1$ and $h_{r}=0$ outside of $\partial K_{r}$. Applying the Cauchy-Schwarz inequality gives
\begin{equation*}
\int h_{r}^{2}\leqslant \int h_{r} \leqslant \sqrt{\Vol(\partial K_{r})} \sqrt{\int h_{r}^{2}}
\end{equation*}
which rewrites as
\begin{align*}
 \enspace \frac{1}{\Vol(\partial K_{r})}\int_{\partial K^{+}_{r}} f_{r}^{2} + \int_{\partial K^{-}_{r}}g_{r}^{2} \leqslant  \enspace \frac{1}{\Vol(\partial K_{r})}&\left(\frac{2}{\kappa_{d}r^{d}}\int_{K\times K^{c}} \mathbf{1}_{||x-y|| \leqslant r} \mathop{dx} \mathop{dy}\right)  \\
\leqslant &\enspace \sqrt{\frac{1}{\Vol(\partial K_{r})}\left( \int_{\partial K_{r}^{+}} f_{r}^{2} + \int_{\partial K_{r}^{-}}g_{r}^{2} \right)},
\end{align*}
so that clearly \eqref{roll1} and \eqref{roll2} are equivalent.
\end{proof}
\end{proposition}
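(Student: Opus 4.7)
The plan is to reduce both quantities to integrals of a single auxiliary function on $\partial K_{r}$ and then compare them via two complementary inequalities. First I would use Fubini's theorem to rewrite
\[
\frac{1}{\kappa_{d}r^{d}}\int_{K\times K^{c}}\mathbf{1}_{\|x-y\|\leqslant r}\,dx\,dy
=\int_{K^{c}}f_{r}=\int_{K}g_{r},
\]
the two identities following by integrating first in $y$, then in $x$ (or vice versa), using the definitions of $f_{r}$ and $g_{r}$.

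Next I would introduce the auxiliary function $h_{r}=\mathbf{1}_{K}g_{r}+\mathbf{1}_{K^{c}}f_{r}$. It takes values in $[0,1]$, and it vanishes outside $\partial K_{r}$: indeed $\mathbf{1}_{K^{c}}f_{r}$ is nonzero only at points $x\in K^{c}$ within distance $r$ of $K$, i.e.\ on $\partial K_{r}^{+}$, and symmetrically for $\mathbf{1}_{K}g_{r}$ on $\partial K_{r}^{-}$. Consequently
\[
\int h_{r}^{2}=\int_{\partial K_{r}^{+}}f_{r}^{2}+\int_{\partial K_{r}^{-}}g_{r}^{2},
\qquad
\int h_{r}=\int_{K^{c}}f_{r}+\int_{K}g_{r}=2\int_{K^{c}}f_{r}.
\]
Writing $A_{r}:=\Vol(\partial K_{r})^{-1}\int h_{r}^{2}$ and $B_{r}:=\Vol(\partial K_{r})^{-1}\kappa_{d}^{-1}r^{-d}\int_{K\times K^{c}}\mathbf{1}_{\|x-y\|\leqslant r}\,dx\,dy$, the first step gives $\int h_{r}=2\Vol(\partial K_{r})B_{r}$, so \eqref{roll1} is $\liminf A_{r}>0$ and \eqref{roll2} is $\liminf B_{r}>0$.

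The comparison goes via two inequalities. Since $0\leqslant h_{r}\leqslant 1$, we have $h_{r}^{2}\leqslant h_{r}$ pointwise, giving $A_{r}\leqslant 2B_{r}$. In the other direction, Cauchy--Schwarz on $h_{r}=h_{r}\mathbf{1}_{\partial K_{r}}$ yields
\[
\int h_{r}\leqslant \sqrt{\Vol(\partial K_{r})}\sqrt{\int h_{r}^{2}},
\]
which after dividing by $\Vol(\partial K_{r})$ reads $2B_{r}\leqslant\sqrt{A_{r}}$, i.e.\ $4B_{r}^{2}\leqslant A_{r}$. The sandwich $4B_{r}^{2}\leqslant A_{r}\leqslant 2B_{r}$ then immediately gives $\liminf A_{r}>0\iff\liminf B_{r}>0$, which is the claimed equivalence.

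There is no real obstacle here; the only mildly delicate point is checking that $h_{r}$ is supported in $\partial K_{r}$ and that the two integrals one obtains are exactly the numerators appearing in \eqref{roll1} and \eqref{roll2}. Once that bookkeeping is done, the proof is just the elementary $h_{r}^{2}\leqslant h_{r}\leqslant \sqrt{|\mathrm{supp}\,h_{r}|}\cdot\|h_{r}\|_{2}$ sandwich.
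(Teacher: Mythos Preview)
Your proof is correct and follows essentially the same approach as the paper: the same Fubini identity, the same auxiliary function $h_{r}=\mathbf{1}_{K}g_{r}+\mathbf{1}_{K^{c}}f_{r}$, and the same pair of inequalities ($h_{r}^{2}\leqslant h_{r}$ and Cauchy--Schwarz against $\mathbf{1}_{\partial K_{r}}$). Your explicit labeling of $A_{r}$ and $B_{r}$ and the sandwich $4B_{r}^{2}\leqslant A_{r}\leqslant 2B_{r}$ make the equivalence a bit more transparent than in the paper, but the argument is the same.
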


\begin{remark}
If $K$ is bi-standard with constant $\varepsilon$ then (\ref{roll1}) holds as well with the left hand being greater than $\varepsilon^{2}$. Hence bi-standarness implies the boundary permeability condition.
\end{remark}

\begin{remark}
\label{comparable-sides-boundary}
Note that
\begin{equation*}
\Vol(\partial K_{r}^{+}) \geqslant \frac{1}{\kappa_{d}r^{d}}\int_{K\times K^{c}} 1_{||x-y|| \leqslant r} \mathop{dx} \mathop{dy},
\end{equation*}
so that $\Vol(\partial K_{r}^{+})\ll \Vol(\partial K_{r})$ prevents \eqref{roll2} from being satisfied. Of course, the same reasoning holds with $\partial K_{r}^{-}$ instead. In other words, it is necessary for the boundary permeability condition to be fulfilled that both sides of the boundary have comparable volumes.
\end{remark}

\paragraph{}
We reproduce below the result derived in \cite[Th. 6.1]{LacPec15} for Voronoi approximation, modified to measure the distance to the normal of the variable $\Vol (K_{\chi _{n}})-\Vol(K)$, instead of $\Vol(K_{\chi _{n}})-\E \Vol(K_{\chi _{n}})$ like in the original result. 
This subtlety is important for dealing with practical applications and obtaining confidence intervals for $\Vol(K)$. We deal with Kolmogorov distance, also  adapted to confidence intervals, and defined by

\begin{align*}
\dK(U,V):=\sup_{t\in \mathbb{R}}\left| \P(U\leqslant t)-\P(V\leqslant t) \right|,
\end{align*} for any random variables $U,V$.
	
\begin{theorem}
\label{LRP}
Let $K$ be a compact  subset of $(0,1)^{d}$. Assume that for some $s < d$ 
\begin{align}
\label{eq:minkowski}
0<\liminf_{r>0}r^{s-d} \Vol(\partial K_{r})\leqslant \limsup_{r>0}r^{s-d } \Vol(\partial K_{r})<\infty,
\end{align}and that $K$ satisfies the boundary permeability condition (Assumption \ref{roll}). Then  \begin{align}
\label{eq:var}
0<\liminf_{r>0} \frac{\Var(\Vol(K_{\chi _{n}}))}{n^{-2+s/d}}\leqslant \limsup_{r>0} \frac{\Var(\Vol(K_{\chi _{n}}))}{n^{-2+s/d}}< \infty,
\end{align}
and for all $\varepsilon > 0$ there is $C_{\varepsilon }>0$ such that for all $n\geqslant 1$
\begin{align}
\label{eq:tcl}
\dK\left(  \frac{\Vol(K_{\chi _{n}})- \Vol(K)}{\sqrt{\Var\left( \Vol(K_{\chi _{n}})\right) }},N\right) \leqslant C_{\varepsilon}n^{-s/2d}\log(n)^{4-s/d+\varepsilon},
\end{align}where $N$ is a standard Gaussian variable.
\end{theorem}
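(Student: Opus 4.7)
The variance statement \eqref{eq:var} is proved exactly as in \cite[Th.~6.1]{LacPec15}: that reference gives variance asymptotics and a Berry--Esseen bound under precisely the Minkowski growth condition \eqref{eq:minkowski} and the boundary permeability assumption, with centering by the mean. So the plan is to invoke \cite{LacPec15} to obtain both \eqref{eq:var} and the self-centered bound
\begin{equation*}
\dK\!\left(  \frac{\Vol(K_{\chi _{n}})- \E\Vol(K_{\chi _{n}})}{\sqrt{\Var\left( \Vol(K_{\chi _{n}})\right) }},N\right) \leqslant C_{\varepsilon}n^{-s/2d}\log(n)^{4-s/d+\varepsilon},
\end{equation*}
and then argue that shifting the centering from $\E\Vol(K_{\chi_n})$ to $\Vol(K)$ costs only an exponentially small quantity that is absorbed by the polynomial bound.

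The key input for the shift is Theorem~\ref{th:voronoi-bias}. Since $K$ is a compact subset of the open set $(0,1)^{d}$, that theorem applies with $U=(0,1)^{d}$ and yields a constant $0<c_{0}<1$ such that $|\E\Vol(K_{\chi_n})-\Vol(K)|\leqslant c_{0}^{n}$. Combined with the lower bound in \eqref{eq:var}, this gives
\begin{equation*}
a_{n}:=\frac{\E\Vol(K_{\chi_n})-\Vol(K)}{\sqrt{\Var(\Vol(K_{\chi_n}))}}=O\!\left(c_{0}^{n}\,n^{1-s/(2d)}\right),
\end{equation*}
which decays faster than any polynomial in $n$.

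To convert this into a Kolmogorov distance statement, write $X_{n}$ and $Y_{n}$ for the standardizations of $\Vol(K_{\chi_n})$ by $\Vol(K)$ and $\E\Vol(K_{\chi_n})$ respectively, so that $X_{n}=Y_{n}+a_{n}$. For any $t\in\mathbb{R}$,
\begin{equation*}
\bigl|\P(X_{n}\leqslant t)-\P(N\leqslant t)\bigr|\leqslant \bigl|\P(Y_{n}\leqslant t-a_{n})-\P(N\leqslant t-a_{n})\bigr|+\bigl|\P(N\leqslant t-a_{n})-\P(N\leqslant t)\bigr|.
\end{equation*}
The first term is bounded by the self-centered Berry--Esseen bound above, uniformly in $t$. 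The second term is at most $|a_{n}|/\sqrt{2\pi}$, using that the standard Gaussian density is bounded by $(2\pi)^{-1/2}$. Taking the supremum in $t$ therefore gives
\begin{equation*}
\dK(X_{n},N)\leqslant C_{\varepsilon}n^{-s/2d}\log(n)^{4-s/d+\varepsilon}+\tfrac{1}{\sqrt{2\pi}}|a_{n}|,
\end{equation*}
and since $|a_{n}|$ is super-polynomially small, the second term is absorbed by the first (up to enlarging $C_{\varepsilon}$), which proves \eqref{eq:tcl}.

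There is no real obstacle here: the only delicate point is verifying that the hypothesis $K\subset(0,1)^{d}$ together with compactness ensures $d(K,\partial(0,1)^{d})>0$, so that Theorem~\ref{th:voronoi-bias} can be applied with $U=(0,1)^{d}$; this is immediate from the definitions. The rest is a standard manipulation exploiting the sharp contrast between the exponential decay of the bias and the polynomial decay of the standard deviation.
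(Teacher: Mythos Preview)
Your proof is correct and follows essentially the same route as the paper: invoke \cite[Th.~6.1]{LacPec15} for \eqref{eq:var} and the mean-centered Berry--Esseen bound, then shift the centering using the exponential bias bound of Theorem~\ref{th:voronoi-bias} and the elementary inequality $\dK(Y+a,N)\leqslant \dK(Y,N)+(2\pi)^{-1/2}|a|$. Your write-up is in fact slightly more careful than the paper's, since you make explicit that compactness of $K$ in $(0,1)^{d}$ guarantees the positive distance to $\partial U$ needed to apply Theorem~\ref{th:voronoi-bias}.
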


\begin{proof} This result is almost exactly  \cite[Th. 6.1]{LacPec15}, except that there it is proved that 
\begin{align}
\label{eq:tcl-2}
\dK\left(  \frac{\Vol(K_{\chi _{n}})- \E\Vol (K_{\chi _{n}})}{\sqrt{\Var\left( \Vol(K_{\chi _{n}})\right) }},N\right) \leqslant C_{\varepsilon} n^{-s/2d}\log(n)^{4-s/d+\varepsilon}.
\end{align}
To have a similar bound  involving $\Vol(K)$ instead of $\E \varphi (\Vol(K_{\chi _{n}}))$,
let us first remark that for $\delta \in \mathbb{R}$, a random variable $U$, and $V=U+\delta $, 
\begin{equation*}
\dK(V,N)\leqslant \dK(V,N+\delta )+\dK(N+\delta ,N)
\leqslant \dK(U,N)+(2\pi )^{-1/2} | \delta |, 
\end{equation*}
since $\dK(V,N+\delta)=\dK(U,N)$. It follows that 
\begin{align*}
\dK\left(  \frac{\Vol(K_{\chi _{n}})- \Vol(K)}{\sqrt{\Var\left( \Vol(K_{\chi _{n}}) \right)}},N  \right) &\leqslant \dK\left(  \frac{\Vol(K_{\chi _{n}})- \E\Vol(K_{\chi _{n}})}{\sqrt{\Var\left( \Vol(K_{\chi _{n}}) \right)}},N  \right) \\
&\hspace{3cm}+(2\pi )^{-1/2}\left| \frac{\E \Vol(K_{\chi _{n}})-\Vol(K)}{\sqrt{\Var(\Vol(K_{\chi _{n}}))}} \right| \\
&\leqslant  \dK\left(  \frac{\Vol(K_{\chi _{n}})- \E\Vol(K_{\chi _{n}})}{\sqrt{\Var\left( \Vol(K_{\chi _{n}}) \right)}},N  \right) +\frac{O(c^{n})}{n^{-1+s/2d }}
\end{align*}for some $c\in (0,1)$ by Theorem~\ref{th:voronoi-bias}. Reporting the bounds of \eqref{eq:tcl-2} yields \eqref{eq:tcl}.

\end{proof}

\begin{remark}
The fact that $[0,1]^{d}$ is the support of the random sampling variables does not seem to have a great importance. Uniformity over $[0,1]^{d}$ eases certain estimates in the proof of \cite[Th. 6.1]{LacPec15} related to stationarity, but is not essential. If the variables are only assumed to have a positive continuous density $\kappa (x)>0$ on an open neighborhood of $\partial K$, it should be enough for similar results to hold. See Theorem \ref{th:voronoi-bias}, or \cite{Pen07}, for rigourous results in this direction. 
\end{remark}

\begin{remark}If $K$ satisfies all the hypotheses of Theorem \ref{LRP} except the boundary permeability condition, then we have \begin{align}
\sup_{t\in \mathbb{R}}&\left | \P\left(  \frac{\Vol(K_{\chi _{n}})-\E \Vol(K_{\chi _{n}})}{\sqrt{\Var\left( \Vol(K_{\chi _{n}}) \right)}} \leqslant t \right)-\P(N\leqslant t)\right |\notag\\
\label{eq:tcl-variance}&\hspace{4cm} \leqslant C_{\varepsilon}n^{\varepsilon }(\sigma ^{-2}n^{-2+s/2d}+\sigma ^{-3}n^{-3+s/d}+\sigma ^{-4}n^{-4+s/d})
\end{align}where $\sigma ^{2}$ is the variance of $\Vol(K_{\chi _{n}})$. See \cite[Th. 6.2]{LacPec15} for more details.
\end{remark}

\begin{remark}Set-estimation literature is also concerned with perimeter approximation \cite[Sec. 11.2.1]{KenMol}. In the context of Voronoi approximation, the study of the functional $\Vol(K_{\chi _{n}}\Delta K)$ has been initiated in  \cite{HevRei, ReiSpoZap}. Although the result is not formally stated, a bound of the form \eqref{eq:tcl-variance} for this functional is available using the exact same method. One has to work separately to obtain a variance lower bound. Such a result with Poisson input has been derived very recently in the paper of Yukich \cite{Yuk15}.

 {Results regarding the volume of the symmetric difference between the set and its approximation can be used to compare Voronoi approximation with another estimator. Indeed, the bound in $n^{-1/d}$ given in \cite{HevRei} for $\E\Vol(K_{\chi _{n}}\Delta K)$ is better than the bound in $(nr_{n}^{d})^{-1/2}$ of \cite{BCP08}, who use the Devroye-Wise estimator with a smoothing parameter $r_{n} \ll n^{-1/(d+1)}$.}
\end{remark}

The consequences of Theorems \ref{theorem1} and \ref{LRP} for sets $K$ with self-similar boundary are immediate, condition (\ref{eq:minkowski}) automatically holds by Proposition \ref{prop1}.

 \begin{corollary}
\label{corollary1}
Let $K$ be a compact set such that $\partial K$ is a finite union of self-similar sets satisfying Assumption \ref{ass:1}. Then (\ref{eq:var}) and (\ref{eq:tcl}) hold.
\end{corollary}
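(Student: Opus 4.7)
The plan is to verify the two main hypotheses of Theorem~\ref{LRP}, namely the Minkowski content condition \eqref{eq:minkowski} and the boundary permeability condition (Assumption~\ref{roll}); the inclusion $K\subset (0,1)^d$ can be arranged by an affine normalisation without loss of generality. Write $\partial K=\bigcup_{i=1}^{k} E_i$ where each $E_i$ is self-similar with similarity dimension $s_i$ and satisfies Assumption~\ref{ass:1}, and set $s=\max_i s_i$.

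For the Minkowski content condition I apply Proposition~\ref{prop1} to each $E_i$ individually, obtaining $c_i r^{d-s_i}\leqslant \Vol(E_i+B(0,r))\leqslant C_i r^{d-s_i}$ for $r<1$. Picking $j$ with $s_j=s$, the inclusion $\partial K\supset E_j$ yields $\Vol(\partial K_r)\geqslant c_j r^{d-s}$, giving the lower bound in \eqref{eq:minkowski}. For the upper bound, the subadditivity $\Vol(\partial K_r)\leqslant \sum_i \Vol(E_i+B(0,r))$ combined with the inequality $r^{d-s_i}=r^{d-s}\cdot r^{s-s_i}\leqslant r^{d-s}$ for $r<1$ (since $s_i\leqslant s$) yields $\Vol(\partial K_r)\leqslant C r^{d-s}$. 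In particular $\Vol(\partial K)=\lim_{r\to 0}\Vol(\partial K_r)=0$.

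To verify boundary permeability, I apply Theorem~\ref{theorem1} to each $E_i$, obtaining constants $\delta_i,\varepsilon_i>0$ such that for $r<\delta_i$ and $x\in E_i$, both $B(x,r)\cap K$ and $B(x,r)\cap K^c$ contain a ball of radius $\varepsilon_i r$, whence $f_r(x),g_r(x)\geqslant \varepsilon'_i$ for some $\varepsilon'_i>0$. Taking minima over the finitely many indices, there exist $\delta,\varepsilon>0$ such that $f_r\geqslant\varepsilon$ and $g_r\geqslant\varepsilon$ on all of $\partial K$ for $r<\delta$. The key step is transferring these pointwise bounds to the neighbourhoods $\partial K^\pm_r$: for any $x\in \partial K^+_{r/2}$, pick $x'\in\partial K$ with $\|x-x'\|\leqslant r/2$, so that $B(x',r/2)\subset B(x,r)$ and $f_r(x)\geqslant 2^{-d} f_{r/2}(x')\geqslant 2^{-d}\varepsilon$; symmetrically, $g_r\geqslant 2^{-d}\varepsilon$ on $\partial K^-_{r/2}$. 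Using $\Vol(\partial K)=0$ to write $\Vol(\partial K_{r/2})=\Vol(\partial K^+_{r/2})+\Vol(\partial K^-_{r/2})$, I obtain
\begin{equation*}
\int_{\partial K^+_r} f_r^2 + \int_{\partial K^-_r} g_r^2 \geqslant (2^{-d}\varepsilon)^2\,\Vol(\partial K_{r/2}).
\end{equation*}
The Minkowski content bounds from the previous paragraph then give $\Vol(\partial K_{r/2})\geqslant c(r/2)^{d-s}\geqslant c'\,\Vol(\partial K_r)$, so Assumption~\ref{roll} is verified and Theorem~\ref{LRP} delivers \eqref{eq:var} and \eqref{eq:tcl}.

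The main obstacle is the transfer step above: the lower density bounds from Theorem~\ref{theorem1} only hold on the fractal set $\partial K$ itself, whereas the boundary permeability condition is an integral over the thickening $\partial K_r$, so one needs to move the bound off the boundary while retaining a comparable volume estimate. This is why the Minkowski content lower bound is needed, to ensure that $\Vol(\partial K_{r/2})$ and $\Vol(\partial K_r)$ stay comparable as $r\to 0$.
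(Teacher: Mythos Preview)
Your proof is correct and follows the same route the paper sketches in one sentence: Proposition~\ref{prop1} for the Minkowski content condition \eqref{eq:minkowski}, Theorem~\ref{theorem1} for the density bounds on $\partial K$, and then verification of Assumption~\ref{roll} so that Theorem~\ref{LRP} applies. Your transfer argument from $\partial K$ to $\partial K_{r/2}^{\pm}$ is precisely the content of Lemma~\ref{lem:inside}, which you could simply have cited; combined with the Remark following Proposition~\ref{roll-bis} (bi-standardness implies boundary permeability), the paper's intended path avoids your final Minkowski-content comparison $\Vol(\partial K_{r/2})\gtrsim\Vol(\partial K_r)$, though your direct integral computation is arguably more transparent. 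One hypothesis you pass over silently is that $K$ have no improper points, needed to invoke Theorem~\ref{theorem1}; the corollary does not state it explicitly either, so it is best read as a standing assumption inherited from the framework of Assumption~\ref{roll}.
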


This corollary applies  to the Von Koch flake with $s=\ln(4)/\ln(3)$ (Example \ref{flake}). The conclusions of Theorem \ref{LRP} also apply for instance to the Von Koch anti flake, where three Von Koch curves are sticked together like for building the flake, but here the curves are pointing inwards, and not outwards (Figure~\ref{VKantiflake}). Assumption \ref{ass:1} is not satisfied on the whole boundary, but it is within an open ball of $\mathbb{R}^{d}$ intersecting one and only one of the three curves, and having (\ref{eq:positive-density-boundary}) on a self-similar $E$ with same Minkowski dimension as $\partial K$ is actually enough for the boundary permeability condition to hold.

\begin{figure}[!h]
\centering
\includegraphics[scale=0.6]{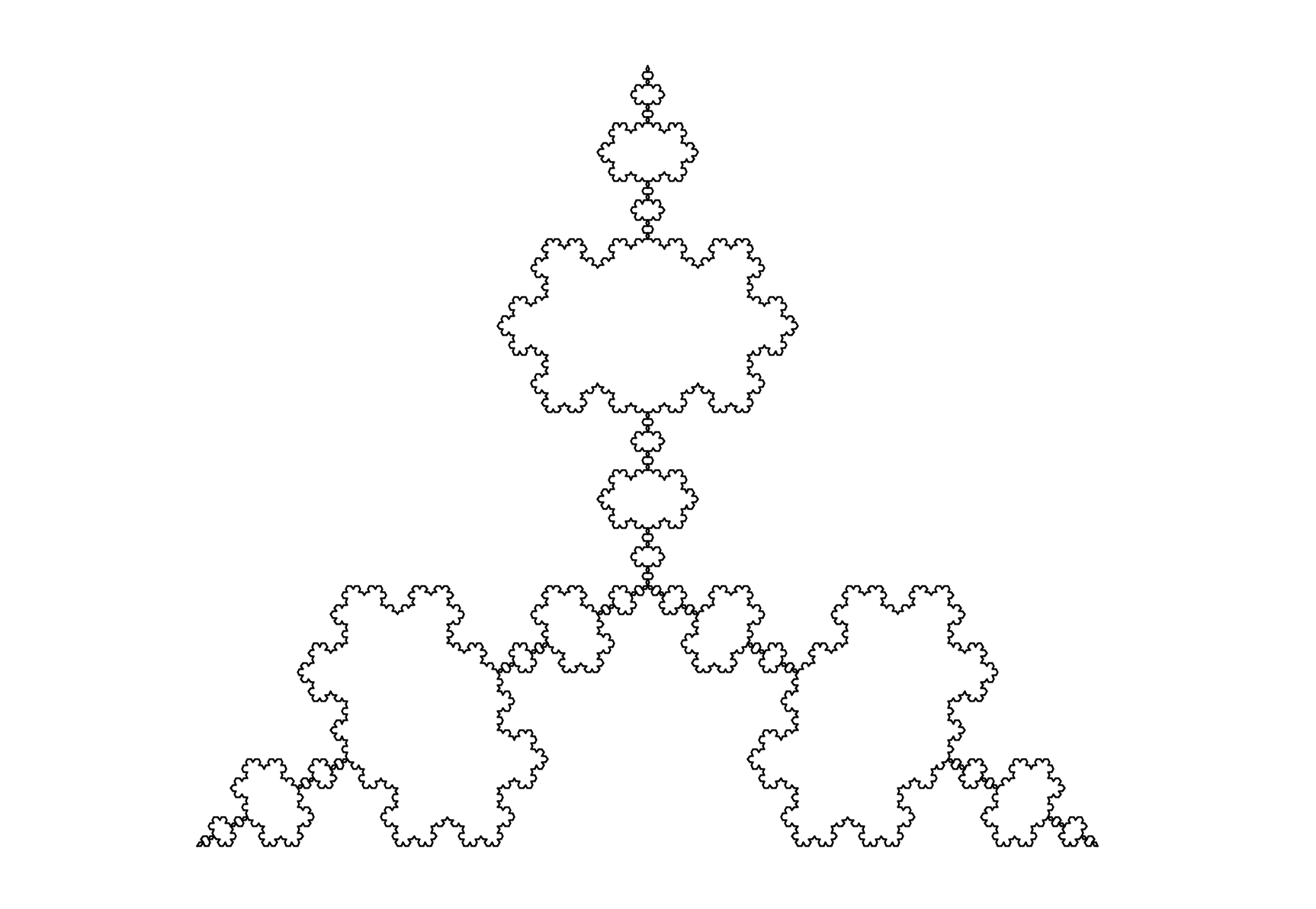}
\caption{\label{VKantiflake} The Von Koch antiflake}
\end{figure}

In Section \ref{sec:counter-example} we exhibit an example of a set $K$ such that $\partial K$ is self-similar and  $K$ does not satisfy Assumption \ref{ass:1}. We run simulations suggesting that (\ref{eq:var}) is also false. Our theorem gives a set of sufficient conditions, but other versions should be valid. For instance, the question of whether a compact set $K\subset \mathbb{R}^{2}$ whose boundary is a  locally self-similar Jordan curve  satisfies the conclusions of the theorem above seems to be of interest.

\subsection{Convergence for the Hausdorff distance}
\label{sec:hausdorff}
In this subsection we will make use of $r$-coverings and $r$-packings. Consider a collection $\mathcal{B}$ of balls having radius $r$ \emph{and centers belonging to some set} $E\subset \mathbb{R}^{d}$. $\mathcal{B}$ is said to be an $r$-packing of $E$ if the balls are disjoint. It is an $r$-covering if the balls cover $E$.

The size of minimal coverings and maximal packings is closely related to the Minkowski dimension of $E$. A necessary and sufficient condition for $E$ to have upper and lower Minkowski contents is that, for all small enough $r$, we can find an $r$-covering of $E$ with less than $Cr^{-s}$ balls, and an $r$-packing of the same set with more than $cr^{-s}$ balls. More related results can be found in \cite{Mattila}.

\paragraph{}
To estimate with precision $r$  the shape {of} a set $E$ by a point process $\chi$ it is often necessary to request that every point of $E$ is at distance less than $r$ of $\chi$. In the context of Voronoi approximation, this is made precise by the following lemma. Note that we only require $\chi$ to be dense enough near $\partial A$. This is, as suggested in the introduction, because Voronoi approximation fills in the interior regions of $K$ where points of $\chi$ are scarce.

\begin{lemma}
\label{lemma3}
Let $\chi \subset \mathbb{R}^{d}$ be a locally finite non-empty set.  
\begin{enumerate}
\item If every point $x$ of $\cl{\partial K_{r}^{+}}$ satisfies $d(x,\chi)< r$ then $K_{\chi} \subset K + B(0,r)$.
\item  If every point $x$ of $\cl{\partial K_{r}^{+}}$ satisfies $d(x,\chi)< r$ and every point $x$ of $\cl{\partial K_{r}^{-}}$ satisfies $d(x,\chi\cap K)< r$ then $d_{H}(K,K_{\chi})\leqslant r$.
\item If every point $x$ of $\cl{\partial K_{r}^{+}}$ satisfies $d(x,\chi\cap K^{c})< r$ and every point $x$ of $\cl{\partial K_{r}^{-}}$ satisfies $d(x,\chi\cap K)< r$  then $d_{H}(\partial K, \partial K_{\chi}) \leqslant r$.
\item If some point $x\in \partial K$ satisfies $d(x,\chi\cap K)\geqslant 3r$ and $d(x,\chi\cap K^{c})\leqslant r$ then $d_{H}(K,K_{\chi})\geqslant r$ and $d_{H}(\partial K,\partial K_{\chi})\geqslant r$ .
\end{enumerate}

\begin{proof}
We begin with the first point. Suppose $x\in K_{\chi}$ satisfies $d(x,K)\geqslant r$. Then there is a point $c_{x}\in \chi\cap K$ such that $x\in\upsilon_{\chi}(c_{x})$. The segment joining $c_{x}$ and $x$ contains points from $\partial K$ so we can consider $x_{0}$ the point of $\cl{\partial K_{r}^{+}}$ closest to $x$ on that segment. We have $d(x_{0},\partial K)=r$ and $x_{0} \in K^{c}$ since otherwise there would be another point of $\cl{\partial K_{r}^{+}}$ closer to $x$. As a consequence $d(x_{0},c_{x}) \geqslant r$. But then by assumption there is a point $y$ of $\chi$ such that $d(x_{0},y)< r$ and $c_{x}$ isn't the point of $\chi$ closest to $x$, which is a contradiction. Hence $x\in K_{\chi}$ implies $d(x,K)< r$ and $K_{\chi}\subset K+B(0,r)$.

\paragraph{}
Note that in the setting of points 2 and 3 we can apply the previous argument to $K^{c}$ instead of $K$, {the compacity of $K$ not playing any role in the proof. Along with $(K_{\chi})^{c}=(K^{c})_{\chi}$} this yields $K^{c}_{\chi }  \subset K^{c}+B(0,r)$, which reformulates as $K \setminus \cl{\partial K_{r}^{-}} \subset K_{\chi}$ by taking complements. Hence in both cases we have the inclusions
\begin{equation*}
K_{\chi}\subset K+ B(0,r), \enspace K_{\chi}^{c}\subset K^{c}+B(0,r),
\end{equation*}
and their reformulations
\begin{equation*}
K\setminus \cl{\partial K_{r}^{-}} \subset K_{\chi}, \enspace K^{c}\setminus \cl{\partial K_{r}^{+}} \subset K^{c}_{\chi}.
\end{equation*}

\paragraph{}
To prove the second point it is enough to show that $K \subset K_{\chi} + B(0,r)$. Let $x$ be a point of $K$. If $x\in K\setminus  \cl{\partial K_{r}^{-}}$ then $x$ belongs to $K_{\chi}$. And if $x$ is in $\cl{\partial K_{r}^{-}}$ then there is a point $y$ of $\chi \cap K$ such that $d(x,y)<r$. In all cases $x\in K_{\chi} + B(0,r)$.

\paragraph{}

We move on to point 3. The two inclusions $K^{c}\setminus \cl{\partial K_{r}} \subset K_{\chi}^{c}$ and $K\setminus \cl{\partial K_{r}} \subset K_{\chi}$ also show that if $x$ satisfies $d(x,\partial K) > r$, $x$ is interior to either $K_{\chi}$ or $K_{\chi}^{c}$. Hence $\partial K_{\chi} \subset \partial K+\cl{B(0,r)}$.  Conversely, for every point $x$ of $\partial K$ there are points of both $\chi \cap K$ and $\chi \cap K^{c}$ inside $B(x,r)$, so $B(x,r)$ contains a point of $\partial K_{\chi}$. Hence $\partial K \subset \partial K_{\chi}+B(0,r)$ and $d_{H}(\partial K, \partial K_{\chi})\leqslant r$.

\paragraph{} Lastly, suppose the requirements of point 4 are met. Let $y$ be a point of $\chi \cap B(x,r) \cap K^{c} $. Then all of the points in $B(x,r)$ are closer to $y$ than to the points outside of $B(x,3r)$. Consequently all points $B(x,r)$ must lie in Voronoi cells centered in $K^{c}$, and $x\notin K_{\chi}+B(0,r)$ so that $d_{H}(K,K_{\chi})\geqslant r$. The fact that $B(x,r)\subset K^{c}_{\chi}$ also implies $d(x,\partial K_{\chi}) \geqslant r$ and $d_{H}(\partial K,\partial K_{\chi})\geqslant r$.
\end{proof}

\end{lemma}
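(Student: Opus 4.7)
My plan is to treat point 1 as the fundamental statement, derive points 2 and 3 by applying point 1 both to $K$ and symmetrically to its complement, and handle point 4 as an independent direct calculation. The unifying idea for the first three is that Voronoi cells are geometrically constrained: if $c \in \chi$ is the nearest point of $\chi$ to $x$, the entire segment $[c, x]$ lies in $\upsilon_\chi(c)$, so any point of $\chi$ that is closer to an intermediate point on this segment than $c$ is yields a contradiction via the triangle inequality.

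For point 1, I would argue by contradiction: assume $x \in K_\chi$ with $d(x, K) \geqslant r$, and take $c_x \in \chi \cap K$ with $x \in \upsilon_\chi(c_x)$. Since the segment $[c_x, x]$ starts inside $K$ and ends at distance $\geqslant r$ from $K$, by continuity of $d(\cdot, \partial K)$ along the segment there is a point $x_0 \in K^c$ on $[c_x, x]$ with $d(x_0, \partial K) = r$; that is, $x_0 \in \cl{\partial K_r^+}$. Because any segment joining $x_0 \in K^c$ and $c_x \in K$ crosses $\partial K$, one has $d(x_0, c_x) \geqslant r$. The hypothesis supplies $y \in \chi$ with $d(x_0, y) < r \leqslant d(x_0, c_x)$, and since $x_0$ lies on $[c_x, x]$, the triangle inequality gives
\begin{equation*}
d(x, y) \leqslant d(x, x_0) + d(x_0, y) < d(x, x_0) + d(x_0, c_x) = d(x, c_x),
\end{equation*}
contradicting $x \in \upsilon_\chi(c_x)$.

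For points 2 and 3, my strategy is to apply point 1 both to $K$ and to $K^c$. Noting that $\partial(K^c) = \partial K$ and $\partial(K^c)_r^+ = \partial K_r^-$, point 1 applied to $K^c$ gives $K_\chi^c \subset K^c + B(0, r)$, which rewrites as $K \setminus \cl{\partial K_r^-} \subset K_\chi$. For point 2 it then remains to handle $x \in K \cap \cl{\partial K_r^-}$, where the additional hypothesis directly provides a point of $\chi \cap K \subset K_\chi$ within distance $r$ of $x$; combined with point 1 this yields $d_H(K, K_\chi) \leqslant r$. For point 3, the two inclusions from point 1 (applied both ways) imply that any point at distance $>r$ from $\partial K$ is interior to $K_\chi$ or to $K_\chi^c$, so $\partial K_\chi \subset \partial K + \cl{B(0, r)}$; conversely, for each $x \in \partial K$ the strengthened hypothesis of point 3 produces points of $\chi \cap K \subset K_\chi$ and of $\chi \cap K^c \subset K_\chi^c$ both inside $B(x, r)$, and connectedness of this ball forces a point of $\partial K_\chi$ in it.

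Point 4 proceeds by direct computation: taking $y \in \chi \cap K^c$ with $d(x, y) \leqslant r$, any $z \in B(x, r)$ satisfies $d(z, y) < 2r$ while every $w \in \chi \cap K$ has $d(z, w) > 2r$, forcing $B(x, r) \subset K_\chi^c$, so $d(x, K_\chi) \geqslant r$ and $d(x, \partial K_\chi) \geqslant r$. The main obstacle throughout will be the geometric bookkeeping in point 1 — producing the intermediate point $x_0$ with all three needed properties (in $K^c$, exactly at distance $r$ from $\partial K$, and at distance $\geqslant r$ from $c_x$) so that the hypothesis of point 1 can be applied at a location where it actually yields a contradiction for the Voronoi cell containing $x$. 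Once point 1 is in hand, the remaining assertions are symmetry and elementary connectedness arguments.
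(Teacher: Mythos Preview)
Your proposal is correct and follows essentially the same approach as the paper: point 1 by contradiction via an intermediate point $x_0$ on the segment $[c_x,x]$ with $d(x_0,\partial K)=r$, points 2 and 3 by applying point 1 symmetrically to $K^c$ (using $\partial(K^c)_r^+=\partial K_r^-$) and handling the remaining boundary strip directly, and point 4 by the same $2r$ versus $2r$ triangle-inequality comparison. Your write-up is in fact slightly more explicit than the paper's in spelling out the triangle inequality for point 1 and the connectedness argument for point 3, but the underlying ideas are identical.
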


\paragraph{}
\label{essential2}

Now we apply this lemma to show almost sure convergence of $K_{\chi}$ in the sense of the Hausdorff distance.
To formulate such a result, the concept of proper points (beginning of Section \ref{sec:boundary-density}) proves to be useful. Improper points are invisible to the Voronoi approximation $K_{\chi}$ of $K$. Though this has no incidence when measuring volumes, it becomes a nuisance when measuring Hausdorff distances.

The set $K^{\text{prop}}$ of points proper to $K$ can be thought of as the complement of the biggest open set $O$ such that $\Vol(O\cap K)=0$, from which it follows that $K^{\text{prop}}$ is compact and that $K_{\chi}=K^{\text{prop}}_{\chi}$ a.s.

\begin{proposition}
\label{consistency-hausdorff}
$K_{\chi_{n}}\underset{n\rightarrow +\infty}{\longrightarrow}K^{\text{prop}}$ and $\partial K_{\chi_{n}}\underset{n\rightarrow +\infty}{\longrightarrow}\partial K^{\text{prop}}$ almost surely in the sense of the Hausdorff metric for both Poisson and binomial input.

\begin{proof}
Since $K_{\chi}=(K^{\text{prop}})_{\chi}$ almost surely and $K^{\text{prop}}$ has no improper points, this is equivalent to the fact that $K_{\chi_{n}}\rightarrow K$ and $\partial K_{\chi_{n}}\rightarrow \partial K$  almost surely when $K$ has no improper points. By the Borel-Cantelli lemma it is enough to show that both series
\begin{equation*}
\sum_{n\geqslant 1}\P(d_{H}(K_{\chi_{n}},K) > r), \enspace \sum_{n\geqslant 1}\P(d_{H}(\partial K_{\chi_{n}},\partial K) > r)
\end{equation*} are convergent for any positive $r$.

Consider $r/2$-coverings $\mathcal{B}^{+},\mathcal{B}^{-}$ of $\cl{\partial K_{r}^{+}}, \cl{\partial K_{r}^{-}}$ respectively. Since both sets are compact, these coverings can be made with finitely many balls. Set $\mathcal{B}=\mathcal{B}^{+}\cup \mathcal{B}^{-}$ and
\begin{equation*}
V=\min\left(\min_{B\in\mathcal{B^{-}}}\Vol(B\cap K),\min_{B\in\mathcal{B^{+}}}\Vol(B\cap K^{c})\right).
\end{equation*}
Because $K$ and $K^{c}$ have no improper points, $V>0$. If every ball of $\mathcal{B}^{+}$ contains a point of $\chi\cap K^{c}$ and every ball of $\mathcal{B}^{-}$ a point of $\chi\cap K$, then the requirements of points 2 and 3 in Lemma~\ref{lemma3} are met. The probability of this not happening is bounded by $|\mathcal{B}|(1-V)^{n}$ for binomial input and $|\mathcal{B}|e^{-nV}$ for Poisson input. In all cases the series associated with $\P(d_{H}(K_{\chi_{n}},K) > r)$ and $\P(d_{H}(\partial K_{\chi_{n}},\partial K) > r)$ converge, as required.
\end{proof}

\end{proposition}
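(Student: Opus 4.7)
The plan is to reduce to the case where $K$ has no improper points, then combine a finite-covering argument with the Borel--Cantelli lemma. For the reduction, since $\chi$ almost surely avoids the volume-zero set $K\setminus K^{\text{prop}}$, one has $K_{\chi}=(K^{\text{prop}})_{\chi}$ a.s.; so it suffices to show that when $K$ has no improper points, $d_H(K_{\chi_n},K)\to 0$ and $d_H(\partial K_{\chi_n},\partial K)\to 0$ almost surely. By Borel--Cantelli it is in turn enough to prove, for every fixed $r>0$, that $\sum_n \P(d_H(K_{\chi_n},K)>r)$ and $\sum_n \P(d_H(\partial K_{\chi_n},\partial K)>r)$ both converge.

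To get such a tail bound I would construct a finite family of ``witness'' balls adapted to $r$. Cover the compact sets $\cl{\partial K_r^+}$ and $\cl{\partial K_r^-}$ by finite collections $\mathcal{B}^+$ and $\mathcal{B}^-$ of balls of radius $r/2$, with centers in the respective sets. Because $K$ is closed and has no improper points, each $B\in\mathcal{B}^-$ (centered in $\cl{\partial K_r^-}\subset K$) satisfies $\Vol(B\cap K)>0$; because $K^c$ is open, each $B\in\mathcal{B}^+$ likewise satisfies $\Vol(B\cap K^c)>0$. Let $V>0$ be the minimum of these finitely many positive volumes.

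On the event that every ball of $\mathcal{B}^+$ contains a point of $\chi\cap K^c$ and every ball of $\mathcal{B}^-$ contains a point of $\chi\cap K$, Lemma~\ref{lemma3} parts 2 and 3, applied with radius $r$, yield simultaneously $d_H(K,K_{\chi})\leqslant r$ and $d_H(\partial K,\partial K_{\chi})\leqslant r$. A union bound then bounds the probability of the bad event by $|\mathcal{B}|(1-V)^n$ for binomial input (taking $\Vol([0,1]^d)=1$) and by $|\mathcal{B}|e^{-nV}$ for Poisson input of intensity $n$; both are summable in $n$, and Borel--Cantelli closes the proof. I do not expect a serious obstacle here, since Lemma~\ref{lemma3} has already packaged the geometric content; the only mild subtlety is the preliminary reduction to $K^{\text{prop}}$ and the verification of positive $\Vol(B\cap K^c)$ for ball centers lying in $\cl{\partial K_r^+}\cap \partial K$, which follows from openness of $K^c$ together with the fact that the ball radius $r/2$ is fixed.
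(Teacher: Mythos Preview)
Your proposal is correct and follows essentially the same route as the paper's proof: the same reduction to $K^{\text{prop}}$, the same Borel--Cantelli strategy, the same finite $r/2$-coverings $\mathcal{B}^{\pm}$ of $\cl{\partial K_r^{\pm}}$, the same minimal volume $V$, and the same union bound feeding into Lemma~\ref{lemma3} parts 2 and 3. The only difference is cosmetic: you justify $\Vol(B\cap K^c)>0$ for $B\in\mathcal{B}^+$ via openness of $K^c$, whereas the paper phrases it as ``$K^c$ has no improper points''; these are the same observation.
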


A refinement of the method above gives an order of magnitude for $d_{H}(K,K_{\chi})$ with Poisson input, under assumptions on $\partial K, f_{r}$ and $g_{r}$ resembling those of Theorem~\ref{LRP}. This requires better estimations of the probability of the points of Lemma~\ref{covering-lemma} being met, which is the purpose of the following lemma.

\begin{lemma}
\label{covering-lemma}
Let $A,\chi$ be non-empty sets, and $\mathcal{B}$ a collection of balls centered on $A$ with radii $r$. Write $\mathcal{B}_{\tau}$ {for} the collection of balls having same centers as those of $\mathcal{B}$ but radius $\tau r$, and choose $\tau_{1},\tau_{2}>0$ such that $\tau_{1}+\tau_{2}=1$. If $\mathcal{B}_{\tau_{1}}$ is a $\tau_{1} r$-covering of $A$ and every ball of $\mathcal{B}_{\tau_{2}}$ contains a point of $\chi$, then $A \subset \chi + B(0,r)$.
\begin{proof}
Let $x$ be a point of $A$. By hypothesis, there is a ball of $\mathcal{B}$ with center $c$ such that $d(x,c)< \tau_{1} r$, and also a point $y$ of $\chi$ such that $d(y,c)< \tau_{2}r$. Hence $d(x,y)< r (\tau_{1}+\tau_{2})$ and $d(x,\chi)< r$. So indeed every point of $A$ is at distance less than $r$ of $\chi$.
\end{proof}
\end{lemma}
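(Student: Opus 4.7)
The plan is to prove this by a direct triangle inequality argument applied pointwise. Fix an arbitrary $x \in A$; I want to produce some $y \in \chi$ with $d(x, y) < r$, which will give the inclusion $A \subset \chi + B(0, r)$.

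First I would use the covering hypothesis. Since $\mathcal{B}_{\tau_1}$ is a $\tau_1 r$-covering of $A$ and its balls have their centers on $A$, there is some center $c$ (shared by a ball of $\mathcal{B}$, $\mathcal{B}_{\tau_1}$, and $\mathcal{B}_{\tau_2}$) with $d(x, c) < \tau_1 r$. Then I would invoke the hypothesis on $\mathcal{B}_{\tau_2}$: the ball of radius $\tau_2 r$ around this same center $c$ must contain some $y \in \chi$, so $d(c, y) < \tau_2 r$. Combining via the triangle inequality,
\begin{equation*}
d(x, y) \leqslant d(x, c) + d(c, y) < \tau_1 r + \tau_2 r = r,
\end{equation*}
so $x \in \chi + B(0, r)$. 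Since $x$ was arbitrary in $A$, the lemma follows.

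There is no real obstacle here; the only subtlety worth flagging is keeping the same center $c$ when switching from $\mathcal{B}_{\tau_1}$ to $\mathcal{B}_{\tau_2}$, which is exactly why the two collections were defined to share centers. The identity $\tau_1 + \tau_2 = 1$ is used only in the final equality and explains why this particular decomposition is the relevant one for controlling distances of order $r$.
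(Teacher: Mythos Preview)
Your proof is correct and follows essentially the same approach as the paper's: pick $x\in A$, use the covering to find a center $c$ with $d(x,c)<\tau_{1}r$, use the second hypothesis to find $y\in\chi$ with $d(c,y)<\tau_{2}r$, and conclude by the triangle inequality. The added remark about keeping the same center $c$ is exactly the point, and the paper's proof is otherwise identical.
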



This handy lemma is meant to give probability estimations of events of the type $A\subset \chi + B(0,r)$, which are useful outside the context of Voronoi approximation. Typically, $\chi$ is chosen to be a random point process, and the covering $\mathcal{B}_{\tau_{1}}$  is chosen deterministically with as few balls as possible, often $C\tau_{1}^{-d}r^{-d}$. Bounding the probability that a ball of $\mathcal{B}_{\tau_{2}}$ does not intersect $\chi$ then gives an upper bound of the form
\begin{equation*}
\P(A \nsubseteq \chi + B(0,r))\leqslant |\mathcal{B}|\max_{B\in \mathcal{B}_{\tau_{2}}} \P(B\cap \chi = \emptyset).
\end{equation*}
The estimations obtained in such applications are less sensible to the number of balls in $\mathcal{B}$ than to their size. Hence optimal results are obtained when $\tau_{1}$ is small.

For example, the reader may use Lemma~\ref{covering-lemma} to derive \cite[Th. 1]{CueRod04} and its counterpart for Poisson input, which are concerned with the order of magnitude of $d_{H}(K,K\cap\chi)$ with $\chi$ an homogenous point process. Note that use of Minkowski contents and boundary densities give slighlty better bounds, which turn out to be optimal, see Remark~\ref{rem:points-vs-voronoi}.

\begin{theorem}
\label{theorem-haus}
Suppose that $\partial K$ has Minkowski dimension $s > 0$ with upper and lower contents, and that for all $r$ small enough, 
\begin{align*}
f_{r} \geqslant & \enspace \varepsilon \enspace \textnormal{on $K$}, \\
g_{r}\geqslant & \enspace \varepsilon \enspace \textnormal{on $K^{c}$}. 
\end{align*}
Then we have
\begin{align*}
&\P\left( \alpha \leqslant \frac{d_{H}(K,K_{\chi'_{\lambda}})}{(\lambda^{-1} \ln(\lambda))^{1/d}} \leqslant \beta \right) \underset{\lambda \rightarrow \infty}{\longrightarrow} 1 \\
&\P\left( \alpha \leqslant \frac{d_{H}(\partial K, \partial K_{\chi'_{\lambda}})}{(\lambda^{-1} \ln(\lambda))^{1/d}} \leqslant \beta \right) \underset{\lambda \rightarrow \infty}{\longrightarrow}  1
\end{align*}
where $\chi'_{\lambda}$ is a Poisson point process of intensity $\lambda$ and $\alpha,\beta$ satisfy $\alpha < \alpha_{K}, \beta > \beta_{K}$ with
\begin{align*}
\alpha_{K} = & \enspace\frac{1}{3}\left(\frac{s}{d\kappa_{d}(1-\varepsilon)}\right)^{1/d}, \\
\beta_{K} = & \enspace \left(\frac{s}{d\kappa_{d}\varepsilon}\right)^{1/d}.
\end{align*}
\end{theorem}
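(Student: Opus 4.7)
The plan is to prove the upper and lower one-sided bounds separately via different parts of Lemma~\ref{lemma3}, with the density and Minkowski-content assumptions controlling the relevant Poisson probabilities. The upper bound (events of the form $d_H \leq \beta r_\lambda$ hold eventually) relies on points 2 and 3 of Lemma~\ref{lemma3} plus the covering technique of Lemma~\ref{covering-lemma}; the lower bound (events $d_H \geq \alpha r_\lambda$) relies on point 4 of Lemma~\ref{lemma3} combined with a packing of $\partial K$ and a Poisson independence argument on disjoint regions.

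\emph{Upper bound.} Fix any $\beta > \beta_K$, pick $\tau_2 < 1$ with $\tau_2 \beta > \beta_K$, set $\tau_1 = 1 - \tau_2$, and let $r = \beta(\lambda^{-1}\ln\lambda)^{1/d}$. The finite upper Minkowski content yields $\Vol(\partial K_r) \leq C r^{d-s}$, from which a standard maximal-packing construction produces a $\tau_1 r$-covering of $\cl{\partial K_r^+}$ with centres in $K^c$ of cardinality at most $C \tau_1^{-d} r^{-s}$, and symmetrically a covering of $\cl{\partial K_r^-}$ with centres in $K$. For any ball $B(c, \tau_2 r)$ with $c \in K^c$, the density hypothesis $g_{\tau_2 r}(c) \geq \varepsilon$ gives $\P(\chi'_\lambda \cap K^c \cap B(c, \tau_2 r) = \emptyset) \leq e^{-\lambda \varepsilon \kappa_d (\tau_2 r)^d} = \lambda^{-\varepsilon \kappa_d (\tau_2\beta)^d}$; the union bound combined with $(\tau_2 \beta)^d > s/(d\kappa_d \varepsilon)$ sends this failure probability to $0$. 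The symmetric argument on the $f_r$ side and Lemma~\ref{covering-lemma} together verify the density hypotheses of Lemma~\ref{lemma3}(2)--(3), producing $d_H(K, K_{\chi'_\lambda}), d_H(\partial K, \partial K_{\chi'_\lambda}) \leq r$ with probability tending to $1$.

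\emph{Lower bound.} Fix $\alpha < \alpha_K$ and let $r = \alpha(\lambda^{-1}\ln\lambda)^{1/d}$. The positive lower Minkowski content yields a maximal $6r$-separated set $\{x_i\}_{i=1}^N \subset \partial K$ with $N \geq c r^{-s}$, so the balls $B(x_i, 3r)$ are pairwise disjoint. Define
\begin{equation*}
A_i = \{\chi'_\lambda \cap K \cap B(x_i, 3r) = \emptyset\}, \qquad B_i = \{\chi'_\lambda \cap K^c \cap B(x_i, r) \neq \emptyset\}.
\end{equation*}
Continuity of $g_r$ extends the bound $g_r \geq \varepsilon$ from $K^c$ to $\cl{K^c} \supset \partial K$, so $f_{3r}(x_i) \leq 1-\varepsilon$; hence $\P(A_i) \geq \lambda^{-3^d (1-\varepsilon)\kappa_d \alpha^d}$ while $\P(B_i) \geq 1 - \lambda^{-\varepsilon\kappa_d\alpha^d} \to 1$. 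Because $A_i$ and $B_i$ depend on the Poisson process restricted to the disjoint regions $K \cap B(x_i, 3r)$ and $K^c \cap B(x_i, r)$, and the balls $B(x_i,3r)$ are disjoint in $i$, all $2N$ events are jointly independent, giving
\begin{equation*}
\P\bigl(\textstyle\bigcap_i (A_i \cap B_i)^c\bigr) \leq \exp\!\Bigl(-\sum_{i=1}^N \P(A_i)\,\P(B_i)\Bigr),
\end{equation*}
and the sum grows like $\lambda^{s/d - 3^d (1-\varepsilon)\kappa_d \alpha^d}$ up to logarithmic factors, diverging precisely when $\alpha < \alpha_K$. Lemma~\ref{lemma3}(4) then forces $d_H(K, K_{\chi'_\lambda}), d_H(\partial K, \partial K_{\chi'_\lambda}) \geq r$ with probability tending to $1$.

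The main obstacle is the two-scale structure of Lemma~\ref{lemma3}(4): the same centre $x_i$ must simultaneously witness an empty $3r$-ball on the $K$-side and an occupied $r$-ball on the $K^c$-side. A naive union bound on the $B_i$'s alone is hopeless, as its failure exponent $\varepsilon\kappa_d \alpha^d$ is smaller than $s/d$ in the regime $\alpha < \alpha_K < \beta_K$; the rescue is that $A_i$ and $B_i$ use disjoint portions of the Poisson process and $\P(B_i) \to 1$ individually, so the joint event $A_i \cap B_i$ is essentially as likely as $A_i$. The factor $3$ in $\alpha_K$ traces to the $3r$ radius in Lemma~\ref{lemma3}(4), and the factor $1-\varepsilon$ to the complementary bound $f_r \leq 1-\varepsilon$ available on $\partial K$.
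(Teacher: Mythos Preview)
Your proof is correct and follows essentially the same approach as the paper's: the upper bound via Lemma~\ref{covering-lemma} with $\tau_2\beta>\beta_K$ and a union bound over $O(r^{-s})$ covering balls, and the lower bound via a $3r$-packing of $\partial K$, Poisson independence over the disjoint balls, and the inequality $\prod_i(1-\P(A_i)\P(B_i))\leqslant\exp\bigl(-\sum_i\P(A_i)\P(B_i)\bigr)$, which is exactly the paper's computation $(1-p)^{|\mathcal{B}|}\to 0$ after taking logarithms. The only cosmetic difference is that the paper speaks of a ``$3r$-packing'' where you speak of a ``$6r$-separated set''; these are equivalent formulations of the same object.
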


\begin{proof}
The approach of the proof is to tune $r$ in Lemma~\ref{lemma3} in order to have the events of points 3 happen with high probability. We shall only show the assertions regarding $d_{H}(\partial K,\partial K_{\chi})$, since the exact same arguments hold with $d_{H}(K, K_{\chi})$ as well.

We start with the upper bound. For all $\lambda$ let $\Omega_{\lambda}$ be the event where all the requirements from point 3 of Lemma~\ref{lemma3} are met with $\chi=\chi'_{\lambda}$, $r=r_{\lambda}=\beta(\lambda^{-1}\ln(\lambda))^{1/d}$. Hence $\{d_{H}(\partial K, \partial K_{\chi})> r \} \subset \Omega_{\lambda}^{c}$. We shall show that $\P(\Omega_{\lambda}^{c})\rightarrow 0$.

Choose $\tau_{1},\tau_{2}<1$ so that $\tau_{1}+\tau_{2}=1$ and $\tau_{2}\beta > \beta_{K}$. Let $\mathcal{B}^{+}$ be a collection of balls with radius $r$ and centers on $\cl{\partial K^{+}_{r}}$. As in Lemma~\ref{covering-lemma}, call $\mathcal{B}^{+}_{\tau}$ the collection of balls with same centers as those of $\mathcal{B}^{+}$, but radius $\tau r$. Define $\mathcal{B}^{-},\mathcal{B}^{-}_{\tau}$ similarily and set $\mathcal{B}=\mathcal{B}^{+}\cup \mathcal{B}^{-}$.  Note that $\mathcal{B}$ depends on $\lambda$, but  $\tau_{1},\tau_{2}$ do not.

We can and do choose $\mathcal{B}^{+},\mathcal{B}^{-}$ so that $\mathcal{B}_{\tau_{1}}^{+},\mathcal{B}_{\tau_{1}}^{-}$ are coverings of $\cl{\partial K_{r}^{+}}$ and $\cl{\partial K_{r}^{-}}$ respectively, and $|\mathcal{B}|$ has less than $C\tau_{1}^{-d}r^{-s}=C\tau_{1}^{-d}(\lambda/\ln{\lambda})^{-s/d}$ balls. Indeed, consider $\tau_{1}r/2$-packings of $\partial K_{r}^{+}$ and $\partial K_{r}^{-}$, both optimal in the sense that no ball can be added without losing the packing property. Because of volume issues, the packings have less than $C\tau_{1}^{-d}r^{-s}$ balls, and because of the optimality assumption doubling the radii of the balls gives the desired $\tau_{1}r$-coverings.


The intersection of $K$ with a ball $B \in \mathcal{B}_{\tau_{2}}^{-}$ of center $x$ has volume exactly $\kappa_{d}(\tau_{2}r)^{d}f_{\tau_{2}r}(x)$. Because $f_{r}\geqslant \varepsilon$ for large enough $\lambda$  and $\tau_{2}\beta > \beta_{K}$ it follows that
\begin{equation*}
\P(B \cap \chi \cap K = \emptyset) \leqslant 
\exp(-\lambda\tau _{2}^{d}\varepsilon\kappa_{d} r^{d})=\lambda^{-s/d-\delta}
\end{equation*}
for some $\delta>0$. The same bound is valid for $\P(B\cap\chi\cap K^{c}=\emptyset), B\in \mathcal{B}_{\tau_{2}}^{+}$.

Applying Lemma~\ref{covering-lemma} twice with $A=\partial K_{r}^{+},\partial K_{r}^{-}$ successively gives
\begin{equation*}
\P(\Omega_{\lambda}^{c}) \leqslant |\mathcal{B}|\lambda^{-s/d-\delta} \leqslant C \tau_{1}^{-d}\ln(\lambda)^{s/d}\lambda^{-\delta}
\end{equation*}
so that, since $\tau_{1}$ is fixed, $\P(\Omega_{\lambda}^{c})\rightarrow 0$ as desired.

\paragraph{}
The proof for the lower bound is quite similar. Fix $\delta > 0$, and redefine $\Omega_{\lambda}$ to be the event where the requirements described in point 4 of Lemma~\ref{lemma3} are met for $\chi=\chi'_{\lambda}$, $r=r_{\lambda}=\alpha(\ln(\lambda)\lambda^{-1})^{1/d}$ with $\alpha < \alpha_{K}$. Again, we shall show $\P(\Omega_{\lambda}^{c})\rightarrow 0$. Let $\mathcal{B}=\mathcal{B}_{\lambda}$ be a $3r$-packing of $\partial K$. We can assume $|\mathcal{B}|\geqslant cr^{-s}$.

%
%

The probability of there being no points of $K\cap\chi_{\lambda}$ in a ball $B(x,3r)$ of $\mathcal{B}$ and at least one point of $K^{c}\cap\chi$ in $B(x,r)$ for a point $x$ in the boundary is exactly
\begin{equation*}
\exp(-\lambda \kappa_{d}(1-g_{3r}(x))3^{d}r^{d})\left(1-\exp(-\lambda \kappa _{d} g_{r}(x) r^{d})\right)
\end{equation*}
because $B(x,3r_{\lambda})\cap K^{c}$ and $B(x,r)\cap K$ are disjoint. So we have the following upper bound, for $\lambda$ big enough 
\begin{equation*}
\P(\Omega_{\lambda}^{c})\leqslant (1-e^{-\lambda \kappa_{d}(1-\varepsilon)3^{d}r^{d}}(1-e^{-\lambda\kappa_{d} r^{d} \varepsilon}))^{|\mathcal{B}|} .
\end{equation*}
We would like the right hand to go to $0$ with $\lambda$. Taking logarithms this is equivalent to
\begin{equation*}
|\mathcal{B}|\exp(-\lambda \kappa_{d}(1-\varepsilon)3^{d}r^{d})(1-\exp(-\lambda \kappa_{d} r^{d} \varepsilon)) \underset{\lambda\rightarrow+\infty}{\longrightarrow} +\infty.
\end{equation*}
Because $\exp(-\lambda \kappa_{d}(1-\varepsilon)3^{d}r^{d})=\lambda^{\delta-s/d}$ with $\delta > 0$, $\exp(-\lambda \kappa_{d}r^{d}\varepsilon)\rightarrow 0$ and $|\mathcal{B}|\geqslant c(\lambda/\ln(\lambda))^{s/d}$, it is indeed the case.

\end{proof}

The proof and the result call for some comments. Most of them are minor variants on the result which were not included in the proof for clarity's sake.

\begin{remark}
\label{rem:no-need-minko}
It is possible to dispose of the hypothesis that $\partial K$ has Minkowski upper and lower contents, by using instead the so-called upper and lower Minkowski dimension, which always exist, see \cite{Mattila}. In particular, we can always do the coverings in the proof with $Cr^{-d}$ balls, so the upper bound still holds after replacing $s$ by $d$ in the expression of $\beta_{K}$. This compares with the result given by Calka and Chenavier in \cite[Corollary 2]{CalChe13}. One can also show, using the fact that $K$ is bounded and has positive volume, that $\mathcal{H}^{d-1}(\partial K)> 0$ so that $s$ can be replaced by $d-1$ in the expression of $\alpha_{K}$. Hence a lower bound also holds with no assumption on $\partial K$'s geometry when $d\geqslant 2$.

{For the results concerned with $d_{H}(\partial K, \partial K_{\chi})$, this is a remarkable feature that to our knowledge no other estimators possess. For instance, in \cite{CueRod04} a so-called expandability condition is required to obtain similar rates with the Devroye-Wise estimator.}

\end{remark}

\begin{remark}
If $s = 0$ and $\partial K$ has Minkowski contents then actually $d=1$, $\partial K$ has a finite number of points, and $d_{H}(K,K_{\chi})$ has order $\lambda^{-1}$ in the sense that for $\lambda$ large enough
\begin{equation*}
\P(d_{H}(K,K_{\chi'_{\lambda}})\lambda > t)\leqslant 2|\partial K|\exp(-2\varepsilon t),
\end{equation*}
which is enough to guarantee the existence of moments of all orders for $d_{H}(K,K_{\chi})\lambda$. This is not true of other shape estimators, and is due to the fact that Voronoi approximation only requires $\chi$ to be dense near $\partial K$ and not on all of $K$. If we don't have Minkowski contents the situation might be more delicate.
\end{remark}

\begin{remark}
Better estimations of the $\P(\Omega_{\lambda}^{c})$ in the proof along with an application of the Borel-Cantelli lemma yield the almost sure convergence rates advertised in the introduction. Explicitly
\begin{equation*}
\alpha_{K} \leqslant \liminf\limits_{n\rightarrow +\infty} \frac{d_{H}(K,K_{\chi'_{n}})}{(n^{-1} \ln(n))^{1/d}} \leqslant \limsup\limits_{n\rightarrow +\infty} \frac{d_{H}(K,K_{\chi'_{n}})}{(n^{-1} \ln(n))^{1/d}} \leqslant \beta'_{K}
\end{equation*}
and similarily for $d_{H}(\partial K, \partial K_{\chi})$, with $\alpha_{K},\beta_{K}$ as in Theorem~\ref{theorem-haus} and $\beta'_{K}=(\beta_{K}^{d}+(1/\kappa_{d}\varepsilon))^{1/d}$.
\end{remark}

\begin{remark}
For binomial input, some minor changes in the proof give the same upper bound. It can't be done for the lower bound since we use the fact that $\chi \cap A, \chi\cap B$ are independent when $A$ and $B$ are disjoint and $\chi$ is a Poisson point process.
\end{remark}

\begin{remark}
\label{rem:points-vs-voronoi}
Using similar techniques as in the proof above it is possible to show that
\begin{equation*}
\frac{d_{H}(K,K\cap\chi'_{\lambda})}{(\lambda^{-1} \ln(\lambda))^{1/d}} \overset{\P}{\longrightarrow}  \left(\frac{2(d-1)}{d\kappa_{d}}\right)^{1/d}
\end{equation*}
if $K$ has no improper points and $\partial K$ is a $\mathcal{C}^{2}$ manifold. Theorem~\ref{theorem-haus} shows that, under the same assumptions, the above limit can be used as an upper bound for $d_{H}(K,K_{\chi_{\lambda}})(\lambda/\ln(\lambda))^{-1/d}$. Hence, as a shape estimator, $K_{\chi}$ is not worse than $\chi \cap K$. It would be interesting to know if it is  better in some sense, a question related to the optimality of the bounds in Theorem~\ref{theorem-haus}.
\end{remark}

\begin{remark}
Applying point 2 of Lemma~\ref{lemma3} instead of point 3 in the proof of the theorem yields a better result for $d_{H}(K,K_{\chi})$. Specifically if $f_{r}\geqslant \varepsilon_{f}$ on $K$ then
\begin{equation*}
\P\left(\frac{d_{H}(K,K_{\chi'_{\lambda}})}{(\lambda^{-1} \ln(\lambda))^{1/d}} \leqslant \beta \right) \underset{\lambda \rightarrow \infty}{\longrightarrow} 1 
\end{equation*}
whenever
\begin{equation*}
\beta >  \left(\frac{s}{d\kappa_{d}\varepsilon_{f}}\right)^{1/d}.
\end{equation*}
{Together with Remark~\ref{rem:no-need-minko} this shows that inner standardness is a sufficient assumption to have convergence rates for $d_{H}(K,K_{\chi})$.}
\end{remark}

\subsection{A counter-example}
\label{sec:counter-example}
Here we construct a set $K_{\text{cantor}}$ with self-similar boundary not satisfying the boundary permeability condition. This example shows that Theorem~\ref{theorem1} cannot be generalised by dropping Assumption~\ref{ass:1}, even if the conclusion is weakened.

The example $K$ below is uni-dimensional, but a counter-example in  dimension $2$ can be obtained by considering $K\times [0,1]$.

\begin{example}
\label{cantor-example}
Let $E\subset \mathbb{R}$ the self-similar set generated by the similarities $\phi_{1}:x\mapsto x/3$, $\phi_{2}:x\mapsto (2+x)/3$ who satisfy the open set condition with $U=(0,1)$. $E$ is in fact the Cantor set, and can be characterized as the set of points having a ternary expansion with no ones.

\paragraph{}
$K_{\text{cantor}}$ will be defined as the closure of open intervals of $[0,1]\setminus E$. The trick is to choose few intervals with quickly decreasing length, so that $f_{r}$ is small on most of $K_{\text{cantor}}$'s boundary, but to distribute them well so that $\partial K_{\text{cantor}} =E$.
\paragraph{}
To every positive integer $n$ associate the sequence $s'^{n}$ of its digits in base $2$ in reverse order and double the terms to get $s^{n}$. For example, since $6$ is $110$ in base $2$, $s^{6}=(0,2,2)$. This defines a bijection between $\mathbb{N}$ and the set of finite sequences of zeroes and twos ending in 2, with the additional property that $s^{n}$ always has length $l_{n}\leqslant n$. Now for all $n$ define 
\begin{align*}
a_{n}= \enspace &\frac{1}{3^{n+1}}+ \sum_{k\geqslant 1}\frac{s^{n}_{k}}{3^{k}}\\
b_{n}= \enspace &\frac{2}{3^{n+1}}+ \sum_{k\geqslant 1}\frac{s^{n}_{k}}{3^{k}}\\
A_{n}= \enspace &(a_{n},b_{n})
\end{align*}
We have the following ternary expansions 
\begin{align*}
a_{n} &=\enspace 0.s^{n}_{1}s^{n}_{2}...s^{n}_{l_{n}}000...01 \\
&=\enspace 0.s^{n}_{1}s^{n}_{2}...s^{n}_{l_{n}}000...0022222... \\
b_{n} &=\enspace 0.s^{n}_{1}s^{n}_{2}...s^{n}_{l_{n}}000...02 \\
\end{align*}
Now, set $K=\cl{\bigcup A_{n}}$. We claim that $K$ has no improper points, $\partial K = E$ and that $K$ does not satisfy the regularity condition of Theorem~\ref{LRP}. 
\begin{proof}
The first assertion is easy to prove. Being segments, the $A_{n}$ have no improper points to themselves, so $\bigcup A_{n} \subset K^{\text{prop}}$ and $K\subset K^{\text{prop}}$ by taking closures.

\paragraph{}
For the second assertion we need to show that $\partial K = K\setminus \bigcup A_{n}=\cl{\bigcup \{a_{n},b_{n}\}}$. 
We already have the obvious $\partial K\subset K\setminus \bigcup A_{n}$. Define 
\begin{align*}
a'_{n}= \enspace &\frac{1}{3^{n+1}} -\frac{2}{3^{l_{n}}} + \sum_{k\geqslant 1}\frac{s^{n}_{k}}{3^{k}}\\
b'_{n}= \enspace &\frac{2}{3^{n+1}} -\frac{2}{3^{l_{n}}} + \sum_{k\geqslant 1}\frac{s^{n}_{k}}{3^{k}}\\
A'_{n}= \enspace &(a'_{n},b'_{n})
\end{align*}
Since for all $n, s_{l_{n}}^{n}=2$, the corresponding ternary expansions are 
\begin{align*}
a'_{n} &=\enspace 0.s^{n}_{1}s^{n}_{2}...s^{n}_{l_{n}-1}000...01 \\
&=\enspace 0.s^{n}_{1}s^{n}_{2}...s^{n}_{l_{n}-1}000...0022222... \\
b'_{n} &=\enspace 0.s^{n}_{1}s^{n}_{2}...s^{n}_{l_{n}-1}000...02 \\
\end{align*} 
If $x \in  A_{i}\cap A'_{j}$ then every ternary expansion of $x$ has the same digits as the finite ternary expansions of $a_{i},a'_{j}$ up to the first 1, which is impossible. So $\bigcup A'_{n}$ is an open set disjoint from $\bigcup A_{n}$ and hence from $K$. Furthermore, $\bigcup A'_{n}$ is dense near the $a_{n}$, because for all $k,N \in \mathbb{N}^{*}$, we can find an $a'_{k'}$ whose ternary expansion has the same $N$ first digits as the non-terminating expansion of $a_{k}$, so that $d(a_{k},a'_{k'}) \leqslant 1/3^{N}$. A similar argument works for the $b_{n}$, so that the $a_{n},b_{n}$ belong to $\partial K$ and, since the latter is closed, $\cl{\bigcup \{a_{n},b_{n}\}} \subset \partial K$.

Finally, consider a point $x\in K\setminus  \bigcup A_{n}$. For all $r > 0$, $B(x,r)$ contains a point from an $A_{k}$, and since $x \notin A_{k}$, one of the two points $a_{k},b_{k}$ must also be in $B(x,r)$. Consequently, $x$ is also an accumulation point of $\bigcup \{a_{n},b_{n}\}$. We just proved that $K\setminus \bigcup A_{n} \subset \cl{\bigcup \{a_{n},b_{n}\}}$. Putting this together with the previous two inclusions we get the desired equality.

Since for all $x \in E, N\in \mathbb{N}^{*}$ we can find an $a_{k}$ with the same first $N$ digits as $x$ in base 3, the $a_{n}$ are dense in $E$ and $E \subset \partial K$. Conversely, $\partial K \subset E$, since the $a_{n}, b_{n}$ belong to $E$, who is closed.

\paragraph{}
For the last assertion, pick any $r >0$ and set $N=2\lceil -\log_{3}(r)\rceil$. Let $X$ be the union of the balls of radius $r$ centered on the endpoints of the $N$ first $A_{n}$. $X$ has area at most $-4r\log_{3}(r)$ and for any $x \in \partial K_{r} \setminus X$, $B(x,r)$ does not intersect the $A_{k}, k\leqslant N$. Since $\Vol(\partial K)=0$ 
\begin{equation*}
\Vol(K\setminus (A_{1}\cup A_{2}\ldots \cup A_{N})) = \Vol(\bigcup_{n> N}A_{n}) = \frac{1}{2.3^{N+1}} \leqslant r^{2}.
\end{equation*}
 {But $\Vol(\partial K_{r})$ has order $r^{1-\ln(2)/\ln(3)}$ and
\begin{equation*}
\Vol(\partial K_{r}^{-}) \leqslant \Vol(X)+ \Vol(K\setminus (A_{1}\cup A_{2}\ldots \cup A_{N})) \leqslant -4r\log_{3}(r) + r^{2}
\end{equation*}
so that $\Vol(\partial K_{r}^{-}) \ll \Vol(\partial K_{r})$. According to Remark~\ref{comparable-sides-boundary}, this prevents \eqref{roll2} from holding.}
\end{proof}
\end{example}

\paragraph{}
Simulations were made for the quality of the Voronoi volume approximation with this set $K$. The magnitude order of the empirical variance of $\Vol(K_{\chi_{n}})$ seems to be $n^{\tau}$ with $\tau\approx -1.8$, as shown in Figure~\ref{regression}. Looking at Theorem~\ref{LRP}, the approximation behaves as if the set had a ``nice'' fractal boundary of dimension $\approx 0.2$, whereas its real fractal dimension is $1-\ln(2)/\ln(3)\approx 0.37$.

\begin{figure}[!h]
\centering
\includegraphics[scale=0.4]{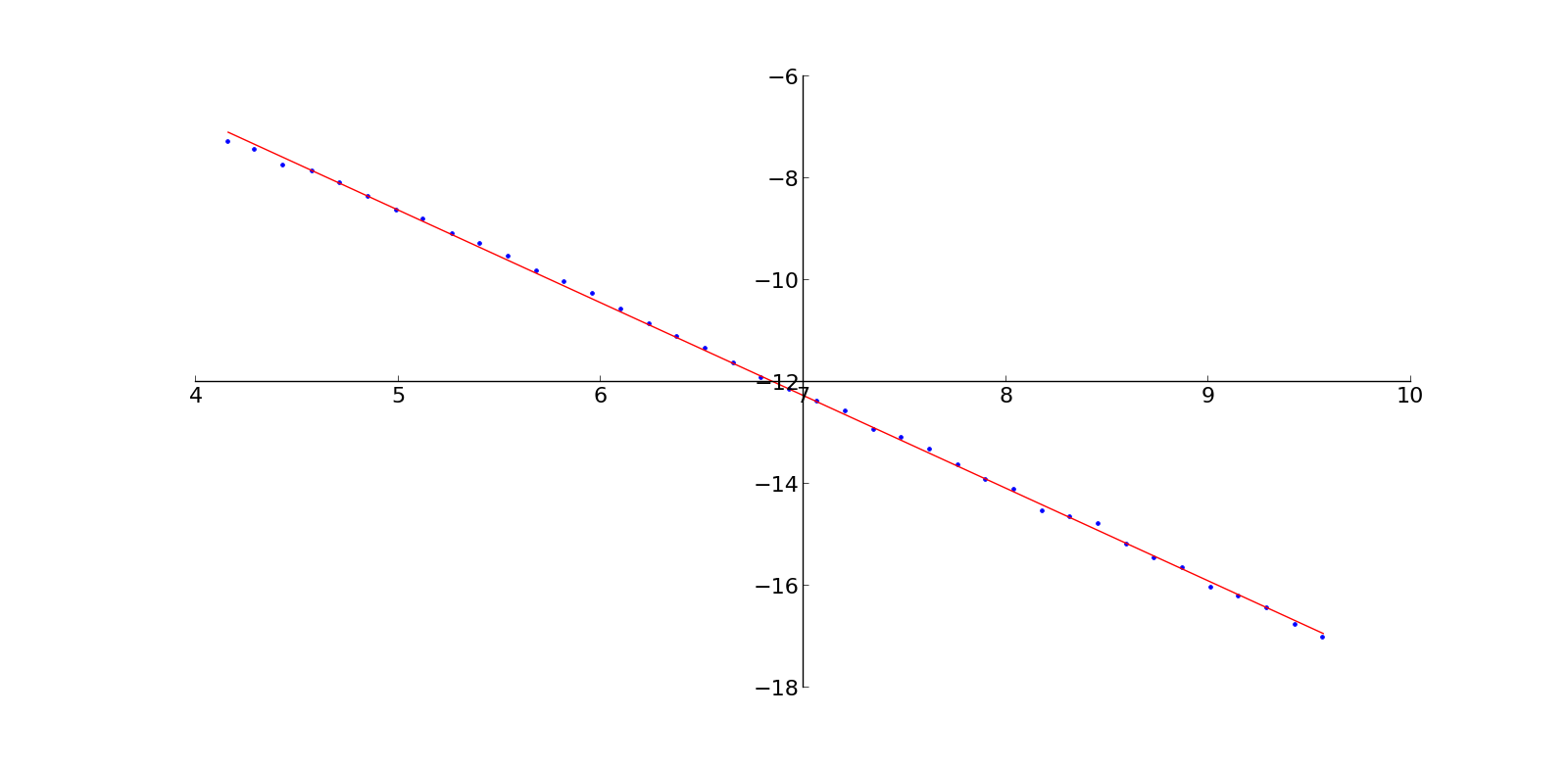}
\caption{\label{regression}In blue $\ln(\Var(K_{\chi_{n}}))$ as a function of $\ln(n)$, in red the associated linear regression. For each $n$, the variance was estimated with 1000 realisations of $\Vol(K_{\chi_{n}})$.}
\end{figure}

Simulations also suggest that a central limit theorem still holds. Such a fact indicates that though the results of Lachieze-Rey and Peccati \cite{LacPec15} seem to be generalisable, the variance of $\Vol(K_{\chi_{n}})$ is indeed related to the behaviour of $f_{r}$ and $g_{r}$ near $\partial K$.

\begin{example} 
It is possible to construct other sets not satisfying the regularity condition of Assumption~\ref{roll}. If we don't require $\partial K$ to be a self-similar set, a much simpler example is given by 
\begin{equation*}
K=\cl{\bigcup_{n\in \mathbb{N}^{*}} \left(\frac{1}{n}-\frac{1}{3^{n}},\frac{1}{n}\right)}.
\end{equation*}
Intentionally, $\partial K$ looks like the set $\{n^{-1}, n\in\mathbb{N}^{*}\}$, who is often given as an example of a countable set with positive Minkowski dimension. $K$ has no improper points, its boundary has Minkowski dimension $1/2$ with upper and lower contents, but $K$ does not satisfy (\ref{roll1}) or (\ref{roll2}). This can be proved using the same methods as in Example~\ref{cantor-example}. Again, simulations tend to show that the variance of $\Vol(K_{\chi_{n}})$ is about $n^{\tau  }$ with $\tau \approx -1,8$ and that a central limit theorem still holds.
\end{example}

\bibliographystyle{plain}
\bibliography{selfsimilar}

\end{document}